\newtheorem{theorem}{Theorem}[section]
\newtheorem{corollary}[theorem]{Corollary}
\newtheorem{definition}[theorem]{Definition}
\newtheorem{example}[theorem]{Example}
\newtheorem{lemma}[theorem]{Lemma}
\newtheorem{proposition}[theorem]{Proposition}
\newtheorem{remark}[theorem]{Remark}
\begin{document}

%

\title{Monotonicity of sets in Hadamard spaces from polarity point of view}

\author[affil1]{Ali Moslemipour}
\ead{ali.moslemipour@gmail.com}
\author[affil2]{Mehdi Roohi}
\ead{m.roohi@gu.ac.ir}
\address[affil1]{Department of Mathematics,
Aliabad Katoul Branch,  Islamic Azad University,
Aliabad Katoul, Iran}
\address[affil2]{Department of Mathematics, 
Faculty of Sciences, Golestan University,
Gorgan, Iran}
\newcommand{\AuthorNames}{A. Moslemipour, M. Roohi}

\newcommand{\FilMSC}{Primary 47H05; Secondary 47H04, 49J40}
\newcommand{\FilKeywords}{(Monotone sets, maximal monotone sets, polarity, flat Hadamard space,
$\mathbb{R}$-tree)}

\begin{abstract}
This paper is devoted to introduce and investigate the notion of monotone sets in Hadamard spaces. First, flat Hadamard spaces are introduced and investigated. It is shown that an Hadamard space $X$ is flat if and only if $X\times X^{\scalebox{0.55}{$\lozenge$}}$ has $\mathcal{F}_l$-property, where $X^{\scalebox{0.55}{$\lozenge$}}$ is the linear dual of $X$. Moreover, monotone and maximal monotone sets are introduced and also monotonicity from polarity point of view is considered. Some characterizations of (maximal) monotone sets, specially based on polarity, are given. Finally, it is proved that any maximal monotone set is sequentially $bw\times${$\|\cdot\|_{\scalebox{0.5}{$\lozenge$}}$}-closed in $X\times X^{\scalebox{0.55}{$\lozenge$}}$.
\end{abstract}

\maketitle

\makeatletter
\renewcommand\@makefnmark%
{\mbox{\textsuperscript{\normalfont\@thefnmark)}}}
\makeatother

\section{Introduction}

\def\smath#1{\text{\scalebox{0.8}{$#1$}}}
\def\sfrac#1#2{\smath{\frac{#1}{#2}}}
\def\smath#1{\text{\scalebox{0.85}{$#1$}}}
\def\sfrac#1#2{\smath{\frac{#1}{#2}}}
\def\sbig#1{\smath{\big#1}}
\def\Big#1{\smath{\bigg#1}}
\def\sBig#1{\smath{\Big#1}}
\def\Big#1{\smath{\Bigg#1}}
\def\bmath#1{\text{\scalebox{1.1}{$#1$}}}
\def\Bmath#1{\text{\scalebox{1.2}{$#1$}}}
\def\bbmath#1{\text{\scalebox{1.3}{$#1$}}}
\def\BBmath#1{\text{\scalebox{1.5}{$#1$}}}
\def\blangle#1{\bmath{\langle#1}}
\def\Blangle#1{\Bmath{\langle#1}}
\def\bblangle#1{\bbmath{\langle#1}}
\def\BBrangle#1{\BBmath{\rangle#1}}
\def\brangle#1{\bmath{\rangle#1}}
\def\Brangle#1{\Bmath{\rangle#1}}
\def\bbrangle#1{\bbmath{\rangle#1}}

\newcommand{\card}{\mathrm{card}}
\newcommand{\cat}{\mathrm{CAT}}
\newcommand{\Lip}{\mathrm{Lip}}
\newcommand{\spa}{\mathrm{span}}
\newcommand{\Ran}{\mathrm{Range}}
\newcommand{\Dom}{\mathrm{Dom}}
\newcommand{\dom}{\mathrm{dom}}

\newcommand*{\LargerCdot}{\raisebox{-0.95ex}{\scalebox{2.5}{$\cdot$}}}
\newcommand*{\loze}{{\scalebox{0.55}{$\lozenge$}}}
\newcommand*{\loz}{{\scalebox{0.5}{$\lozenge$}}}

In this section, we collect some fundamental definitions and  general notations of Hadamard spaces that will be used throughout of this paper. For background materials on Hadamard spaces, we refer to the standard texts and literatures such as \cite{Bacak2014,BridsonHaefliger,BergNikolaev,KakavandiAmini}.

Given a metric space $(X,d)$, a \textit{geodesic path} from $x \in X$ to $y \in X$ is a map $c:[0,1] \rightarrow X$ such that $c(0)=x$, $c(1)=y$ and $d(c(t),c(s))=|t-s|d(x,y)$, for each $t,s \in [0,1]$.
The image of $c$ is called a \textit{geodesic segment} joining $x$ and $y$.
A metric space $(X,d)$ is a \textit{geodesic space} if for each $x,y \in X$, there exists a geodesic path from $x$ to $y$.
Also, a geodesic space $(X,d)$ is called \textit{uniquely geodesic space} if for every $x, y\in X$ there exists a unique geodesic from $x$ to $y$.

A geodesic space $(X,d)$ is called a \textit{$\cat(0)$ space} if we have:
\begin{align}\label{cn}
d(z,c(t))^2 \leq (1-t)d(z,x)^2 +td(z,y)^2-t(1-t)d(x,y)^2,
\end{align}
for each geodesic path $c:[0,1] \rightarrow X$ from $x$ to $y$, each $z \in X$ and each $t \in [0,1]$. Inequality (\ref{cn}) is called the \textit{CN-inequality}.  It is known {\rm{\rm\cite[Theorem 1.3.3]{Bacak2014}}} that  $\cat(0)$ spaces are uniquely geodesic spaces. Furthermore, the image of unique geodesic path $c:[0,1] \rightarrow X$  from  initial point $x$ to terminal point $y$ is denoted by $[x,y]$, i.e., $c([0,1])=[x,y]$. In this case, for each $z \in [x,y]$, we write $z=(1-t)x \oplus ty$ and say that $z$ is a \textit{convex combination} of $x$ and $y$. A complete $\cat(0)$ space is called \textit{Hadamard space}. Basic examples of Hadamard spaces are: Hilbert spaces, Hadamard manifolds, Euclidean Buildings and $\mathbb{R}$-trees, (see {\rm{\rm\cite[Chapter II.1, 1.15]{BridsonHaefliger}}} for many other examples).

In 2008, Berg and Nikolaev \cite{BergNikolaev} introduced the concept of \textit{quasilinearization} in abstract metric spaces. Ahmadi Kakavandi and Amini \cite{KakavandiAmini} defined the \textit{dual space} for an Hadamard space $(X,d)$  by using the concept of quasilinearization of it. More precisely, let $X$ be an Hadamard space. For each $x, y\in X$, the ordered pair $(x, y) \in X^2$ is denoted by $\overrightarrow{xy}$ and will be called a \textit{bound vector}. For each $x \in X$, the \textit{zero bound vector} at $x\in X$ will be written as $\mathbf{0}_x:=\overrightarrow{xx}$. We identify two bound vectors $-\overrightarrow{xy}$ and $\overrightarrow{yx}$. The bound vectors $\overrightarrow{xy}$ and $\overrightarrow{uz}$ are called \textit{admissible} if $y=u$. The operation of addition of two admissible bound vectors $\overrightarrow{xy}$ and $\overrightarrow{yz}$ is defined by  $\overrightarrow{xy}+\overrightarrow{yz}:=\overrightarrow{xz}$.
The \textit{quasilinearization map} is defined by
\begin{align}\label{inb}
\langle \cdot,\cdot\rangle &: X^2 \times X^2 \rightarrow \mathbb{R},\\ \nonumber
\langle \overrightarrow{ab}, \overrightarrow{cd} \rangle & :=\frac{1}{2}\big(d(a,d)^2+d(b,c)^2-d(a,c)^2-d(b,d)^2\big), ~a, b, c, d,\in X.
\end{align}
\begin{remark}\label{inprob}
For each $x,y,z,u,v \in X$ we have:\\[-1mm]

{\rm(i)} $\langle \overrightarrow{xy}, \overrightarrow{uv} \rangle=\langle \overrightarrow{uv}, \overrightarrow{xy} \rangle$,\\[-1mm]

{\rm(ii)} $\langle \overrightarrow{xy}, \overrightarrow{uv} \rangle=-\langle \overrightarrow{yx}, \overrightarrow{uv} \rangle$,\\[-1mm]

{\rm(iii)} $\langle \overrightarrow{xy}, \overrightarrow{uv} \rangle=\langle \overrightarrow{xz}, \overrightarrow{uv} \rangle+\langle \overrightarrow{zy}, \overrightarrow{uv} \rangle$.
\end{remark}

In \cite[Corollary 3]{BergNikolaev} based on the \textit{Cauchy-Schwarz inequality} (see \eqref{csineq}), $\cat(0)$ spaces have been characterized. More precisely, a geodesic space $(X,d)$ is a $\cat(0)$ space if and only if for each $a, b, c, d\in X$ we have:
\begin{equation} \label{csineq}
\langle \overrightarrow{ab}, \overrightarrow{cd} \rangle \leq d(a,b)d(c,d).
\end{equation}
Consider the map
\begin{align*}
\Theta &:\mathbb{R} \times X^2 \rightarrow C(X,\mathbb{R})\\
& (t,a,b) \mapsto \Theta (t,a,b)x=t \langle \overrightarrow{ab}, \overrightarrow{ax} \rangle, ~a,b,x \in X ,~t \in \mathbb{R},
\end{align*}
where $ C(X,\mathbb{R}) $ is the space of all continuous real-valued functions on $\mathbb{R} \times X^2 $. It follows from the Cauchy-Schwarz inequality \eqref{csineq} that $  \Theta (t,a,b) $ is a Lipschitz function with Lipschitz semi-norm
\begin{equation}\label{lips}
L(\Theta (t,a,b) )=|t|d(a,b) ,~a,b \in X,~t \in \mathbb{R}.
\end{equation}
Recall that the \textit{Lipschitz semi-norm} on $C(X,\mathbb{R})$ is defined by
\begin{align*}
L&:C(X,\mathbb{R}) \rightarrow \mathbb{R}\\
\varphi & \mapsto \sup\Big\{ \frac{\varphi(x)-\varphi(y)}{d(x,y)}:x,y \in X, x\neq y\Big\}.
\end{align*}
The Lipschitz semi-norm (\ref{lips}) induces a \textit{pseudometric} $D$ on $\mathbb{R} \times X^2$, which is defined by
\[D((t,a,b),(s,c,d))=L(\Theta (t,a,b)-\Theta (s,c,d)),~a,b,c,d \in X, \,t,s \in \mathbb{R}.\]

The pseudometric space $ (\mathbb{R} \times X^2,D)$ can be considered as a subspace of the pseudometric space of all real-valued Lipschitz functions $(\Lip(X,\mathbb{R}), L)$. \mbox{Accordance} to \cite[Lemma 2.1]{KakavandiAmini}), we have:
\[ D((t,a,b),(s,c,d))=0 ~\text{if and only if} ~t\langle \overrightarrow{ab},\overrightarrow{xy} \rangle=s\langle \overrightarrow{cd},\overrightarrow{xy} \rangle~\text{for all}~ x,y \in X .\]
It is easily seen that $D$ induces an equivalence relation on $\mathbb{R} \times X^2$. Indeed,  the equivalence class of $(t,a,b)\in\mathbb{R} \times X^2$ is given by
\[[t\overrightarrow{ab}]=\big\{s\overrightarrow{cd}: D((t,a,b),(s,c,d))=0\big\}. \]

The set of all equivalence classes equipped with the metric $D$, defined by
\[D([t\overrightarrow{ab}], [s\overrightarrow{cd}]):=D((t,a,b),(s,c,d)),\]
 is called the \textit{dual space} of the Hadamard space $X$, and is denoted by $X^*$. By using the definition of equivalence classes, we get $[\overrightarrow{aa}]=[\overrightarrow{bb}]$ for each $a,b \in X$.  In general, $X^*$ acts on $X^2$ by
\begin{equation}\label{efxonab}
\langle x^*,\overrightarrow{xy} \rangle =t \langle \overrightarrow{ab}, \overrightarrow{xy} \rangle,~\text{where}~x^*=[t\overrightarrow{ab}] \in X^*, a,b\in X, t\in\mathbb{R}~\text{and}~ \overrightarrow{xy} \in X^2.
\end{equation}

Throughout this paper, we use the following notation:
\[\big\langle \sum_{i=1}^n\alpha_ix_i^*,\overrightarrow{xy} \big\rangle:=\sum_{i=1}^n\alpha_i\big\langle x_i^*,\overrightarrow{xy}\big\rangle,~\alpha_i \in \mathbb{R}, x_i^* \in X^*, x, y\in X, n\in \mathbb{N}.\]
In \cite{ChaipunyaKumam}, Chaipunya and Kumam introduced the concept of  \textit{linear dual space} of an Hadamard space $(X,d)$, as follows:
\[X^{\loze}=\Big\{\sum_{i=1}^n\alpha_i x_i^* :\alpha_i \in \mathbb{R}, x_i^* \in X^*, n \in \mathbb{N}\Big\}.\]
Indeed, $X^\loze={\spa}\, X^*$\!. The zero element of $X^\loze$ is denoted by $\mathbf{0}_{X^\loz}:=[t\overrightarrow{aa}]$, where  $a \in X$ and $t \in \mathbb{R}$. One can see that the evaluation $\langle \mathbf{0}_{X^\loz}, \cdot \rangle$ vanishes on $X^2$.
It is worth mentioned that  $X^\loze$ is a normed space with the norm $\|x^\loz\|_{\loz}=L(x^\loz)$, for all $x^\loz \in X^\loze$. Indeed:
\begin{proposition} {\rm\cite[Proposition 3.5]{ZamaniRaeisi}}\label{dg} Let $X$ be an Hadamard space with the linear dual space $X^\loze$ and let $x^\loz \in X^\loze$ be arbitrary. Then
\begin{align*}
\|x^\loz \|_{\loz}:=\sup\Big\{\sfrac{\big|\Blangle x^\loz,\overrightarrow{ab}\Brangle-
\Blangle x^\loz,\overrightarrow{cd}\Brangle~\!\!\big|}{d(a,b)+d(c,d)}: a,b,c,d \in X, (a,c)\neq (b,d) \Big\},
\end{align*}
is a norm on $X^\loze$. In particular, $\|[t\overrightarrow{ab}]\|_{\loz}=|t|d(a,b)$.
\end{proposition}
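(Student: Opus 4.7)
The plan is to verify the four norm axioms for the candidate $\|\cdot\|_{\loze}$ directly from the defining supremum, using Cauchy--Schwarz \eqref{csineq} for the analytic estimate and the bilinearity properties (i)--(iii) for the algebra. I would then realise the supremum explicitly at a generator $[t\overrightarrow{ab}]$.

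\textbf{Finiteness, homogeneity, subadditivity.} Writing $x^{\loze}=\sum_{i=1}^n\alpha_i[t_i\overrightarrow{a_ib_i}]$, I expand the numerator $|\langle x^{\loze},\overrightarrow{ab}\rangle-\langle x^{\loze},\overrightarrow{cd}\rangle|$ by linearity and apply \eqref{csineq} to each summand to obtain $|\langle \overrightarrow{a_ib_i},\overrightarrow{ab}\rangle|\le d(a_i,b_i)d(a,b)$ and the analogous bound on $(c,d)$. Dividing by $d(a,b)+d(c,d)$ produces the uniform estimate $\|x^{\loze}\|_{\loze}\le\sum_i|\alpha_it_i|d(a_i,b_i)<\infty$, so the supremum is well defined. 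Homogeneity $\|\lambda x^{\loze}\|_{\loze}=|\lambda|\|x^{\loze}\|_{\loze}$ is then immediate from the linearity of $\langle\cdot,\overrightarrow{ab}\rangle$ in the first slot, and the triangle inequality follows from $|u_1+v_1-u_2-v_2|\le|u_1-u_2|+|v_1-v_2|$ applied inside the supremum.

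\textbf{Positive definiteness.} Suppose $\|x^{\loze}\|_{\loze}=0$. Pick any $a\neq b$ in $X$ and any $c\in X$, and take $d=c$ in the defining ratio. A direct computation from \eqref{inb} gives $\langle\overrightarrow{a_ib_i},\overrightarrow{cc}\rangle=0$ for every $i$, so $\langle x^{\loze},\overrightarrow{cc}\rangle=0$; the numerator therefore collapses to $|\langle x^{\loze},\overrightarrow{ab}\rangle|$ while the denominator equals $d(a,b)>0$. Hence $\langle x^{\loze},\overrightarrow{ab}\rangle=0$ for every pair $a,b$, which means $x^{\loze}$ kills every bound vector and thus $x^{\loze}=\mathbf{0}_{X^{\loze}}$.

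\textbf{Value on $[t\overrightarrow{ab}]$ and main obstacle.} For $x^{\loze}=[t\overrightarrow{ab}]$ the upper estimate $\|x^{\loze}\|_{\loze}\le|t|d(a,b)$ is the one-term case of the Cauchy--Schwarz bound above. For the matching lower bound I test the supremum at the quadruple $(a,b,a,a)$: evaluating \eqref{inb} with $a=c$ and $b=d$ yields $\langle\overrightarrow{ab},\overrightarrow{ab}\rangle=d(a,b)^2$, while $\langle\overrightarrow{ab},\overrightarrow{aa}\rangle=0$. Assuming $a\neq b$ (the case $a=b$ gives $[t\overrightarrow{ab}]=\mathbf{0}_{X^{\loze}}$, for which both sides vanish), the test ratio equals $|t|d(a,b)^2/d(a,b)=|t|d(a,b)$, finishing the equality. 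The step I expect to be subtle is recognising the right degenerate test: the single trick of sending $d\to c$ collapses $\langle x^{\loze},\overrightarrow{cd}\rangle$ to zero via $\langle x^{\loze},\overrightarrow{cc}\rangle=0$, and this one observation drives both the definiteness argument and the attainment of the supremum on $[t\overrightarrow{ab}]$.
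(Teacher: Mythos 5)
The paper does not prove this proposition; it imports it verbatim from \cite[Proposition 3.5]{ZamaniRaeisi}, so there is no internal proof to compare against. Your argument is a correct, self-contained verification, and it is the natural one: Cauchy--Schwarz \eqref{csineq} (applied to both $\overrightarrow{a_ib_i}$ and $\overrightarrow{b_ia_i}$ to get the two-sided bound) gives finiteness, linearity gives homogeneity and subadditivity, and the degenerate test $d=c$ — using $\langle x^\loz,\overrightarrow{cc}\rangle=0$ from \eqref{inb} — simultaneously yields definiteness and the attainment of the value $|t|\,d(a,b)$ on a generator via the quadruple $(a,b,a,a)$, which is admissible since $(a,a)\neq(b,a)$ exactly when $a\neq b$. (The stated formula $\|[t\overrightarrow{ab}]\|_\loz=|t|d(x,y)$ is a typo in the paper for $|t|d(a,b)$, which is what you prove.) The only step deserving an explicit word is the last line of your definiteness argument: from $\langle x^\loz,\overrightarrow{ab}\rangle=0$ for all $a,b$ you conclude $x^\loz=\mathbf{0}_{X^\loze}$, which requires that elements of $X^\loze=\spa X^*$ are identified by their action on $X^2$ (equivalently, that the span is taken inside $\Lip(X,\mathbb{R})$ rather than as a free vector space over $X^*$). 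That is the intended reading — the paper sets $\|x^\loz\|_\loz=L(x^\loz)$, which only makes sense for $x^\loz$ viewed as a function, and the proposition would be false otherwise — but it is worth stating, since it is precisely the point at which the seminorm becomes a norm.
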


\begin{remark} In view of \eqref{efxonab} and Remark \ref{inprob}(iii), for each $a,b,w \in X$ and each $x^{*}=[t\overrightarrow{uv}]$ with $t \in \mathbb{R}$,  $u,v \in X$, we get:
\begin{align*}
\langle x^{*},\overrightarrow{ab} \rangle
&=t\langle \overrightarrow{uv},\overrightarrow{ab} \rangle
=t\langle \overrightarrow{uv},\overrightarrow{aw}+\overrightarrow{wb} \rangle=t\big(\langle \overrightarrow{uv},\overrightarrow{aw} \rangle+ \langle \overrightarrow{uv},\overrightarrow{wb} \rangle\big)= \langle x^{*},\overrightarrow{aw} \rangle+\langle x^{*},\overrightarrow{wb}\rangle.
\end{align*}
Also, let $x^\loz \in X^\loze$ be fixed and arbitrary. Then there are $n\in\mathbb{N}$, $\alpha_1, \alpha_2, \ldots, \alpha_n\in\mathbb{R}$ and $x^{*}_1, x^{*}_2, \ldots, x^{*}_n\in X^*$ such that $x^\loz=\sum_{i=1}^n\alpha_i x_i^{*}$. Then
\begin{align*}
\langle x^\loz,\overrightarrow{ab} \rangle &=\Big\langle \sum_{i=1}^n\alpha_i x_i^{*},\overrightarrow{ab} \Big\rangle 
= \sum_{i=1}^n\alpha_i \langle x_i^{*},\overrightarrow{ab} \rangle \\
&= \sum_{i=1}^n\alpha_i \big(\langle x_i^{*},\overrightarrow{aw} \rangle + \langle x_i^{*},\overrightarrow{wb} \rangle\big)\\
&= \sum_{i=1}^n\alpha_i \langle x_i^{*},\overrightarrow{aw} \rangle +\sum_{i=1}^n\alpha_i \langle x_i^{*},\overrightarrow{wb} \rangle\\
&=\Big\langle \sum_{i=1}^n\alpha_i x_i^{*},\overrightarrow{aw} \Big\rangle+\Big\langle \sum_{i=1}^n\alpha_i x_i^{*},\overrightarrow{wb} \Big\rangle\\
&=\langle x^\loz,\overrightarrow{aw} \rangle+\langle x^\loz,\overrightarrow{wb} \rangle.
\end{align*}
Therefore, $\langle x^\loz,\overrightarrow{ab} \rangle=\langle x^\loz,\overrightarrow{aw} \rangle+\langle x^\loz,\overrightarrow{wb} \rangle$.
\end{remark}
\begin{definition} {\rm{\rm\cite[Definition 2.4]{KakavandiAmini}}}{\rm\,Let $\{x_n\}$ be a sequence in an Hadamard space $X$. The sequence $\{x_n\}$ is said to be \textit{weakly convergent} to $x\in X$, denoted by $x_n\overset{w}\longrightarrow x$, if $\lim_{n \rightarrow \infty} \langle \overrightarrow{xx_n},\overrightarrow{xy} \rangle =0$, for all $y \in X$.
}\end{definition}
One can easily see that convergence in the metric implies weak convergence.
\begin{proposition} {\rm{\rm\cite[Proposition 3.6]{ZamaniRaeisi}}} \label{loz1}Let $\{x_n\} $ be a bounded sequence in  an Hadamard space $(X,d)$ with the linear dual space $X^\loze$ and let $\{x^\loz_n\}$ be a sequence in $X^{\loze}$. If  $\{x_n\}$ is  weakly convergent to $x\in X$ and $x^{\loz}_n\xrightarrow{\|\cdot\|_\loz}x^\loz$, then $\langle x^\loz_n,\overrightarrow{x_nz}\rangle \rightarrow \langle x^\loz,\overrightarrow{xz}\rangle $,  for all $z\in X$.
\end{proposition}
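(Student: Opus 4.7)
The plan is to write $\langle x_n^\loz, \overrightarrow{x_n z}\rangle - \langle x^\loz, \overrightarrow{xz}\rangle$ as a sum of two pieces, one that vanishes by the norm convergence of $\{x_n^\loz\}$ together with boundedness of $\{x_n\}$, and one that vanishes purely from the definition of weak convergence. Concretely, I would use the decomposition
\begin{equation*}
\langle x_n^\loz, \overrightarrow{x_n z}\rangle - \langle x^\loz, \overrightarrow{xz}\rangle = \langle x_n^\loz - x^\loz, \overrightarrow{x_n z}\rangle + \big(\langle x^\loz, \overrightarrow{x_n z}\rangle - \langle x^\loz, \overrightarrow{xz}\rangle\big)
\end{equation*}
and estimate the two summands separately.

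For the first summand, the idea is to exploit Proposition \ref{dg} by specializing the supremum to pairs $(c,d)=(z,z)$, which makes $\langle y^\loz,\overrightarrow{zz}\rangle=0$ and yields the convenient estimate $|\langle y^\loz,\overrightarrow{ab}\rangle|\le \|y^\loz\|_{\loz}\,d(a,b)$ valid for every $y^\loz\in X^\loze$ and every $a,b\in X$. Applied to $y^\loz=x_n^\loz-x^\loz$ and $(a,b)=(x_n,z)$ this gives
\begin{equation*}
|\langle x_n^\loz - x^\loz, \overrightarrow{x_n z}\rangle| \leq \|x_n^\loz - x^\loz\|_{\loz}\cdot d(x_n, z),
\end{equation*}
and since $\{x_n\}$ is bounded while $\|x_n^\loz-x^\loz\|_{\loz}\to 0$ by hypothesis, the first summand tends to $0$.

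For the second summand, property (iii) of the quasilinearization map immediately gives
$\langle x^\loz, \overrightarrow{x_n z}\rangle - \langle x^\loz, \overrightarrow{xz}\rangle = \langle x^\loz, \overrightarrow{x_n x}\rangle$, so the problem reduces to showing $\langle x^\loz, \overrightarrow{x_n x}\rangle\to 0$. Writing $x^\loz=\sum_{i=1}^k \alpha_i[t_i\overrightarrow{a_ib_i}]$ and using linearity in the first slot, it suffices to check $\langle\overrightarrow{a_ib_i},\overrightarrow{x_nx}\rangle\to 0$ for each $i$. I would split $\overrightarrow{a_ib_i}$ through the weak limit $x$ via (iii), then apply (i) and (ii) to rewrite $\overrightarrow{x_nx}=-\overrightarrow{xx_n}$ and recenter the second factors at $x$; this expresses each term as a linear combination of quantities of the form $\langle\overrightarrow{xx_n},\overrightarrow{xy}\rangle$ with $y\in\{a_i,b_i\}$, each of which vanishes by the very definition of $x_n\xrightarrow{w}x$.

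The main obstacle I anticipate is the algebraic bookkeeping of Step~3: weak convergence is defined only against bound vectors based at the limit $x$, whereas a general element of $X^\loze$ is a linear combination of equivalence classes $[t\overrightarrow{ab}]$ whose tail $a$ has nothing to do with $x$. The identities (i)--(iii) are precisely what allows one to reduce arbitrary pairings to base-point-$x$ pairings, so no further analytic input (completeness, CAT(0) comparison, subsequence extraction, etc.) is required beyond the Cauchy-Schwarz inequality already baked into Proposition~\ref{dg}.
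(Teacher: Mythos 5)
The paper does not actually prove Proposition \ref{loz1}; it imports it verbatim from \cite{ZamaniRaeisi}, so there is no in-paper proof to compare against. Your argument is correct and complete, and it is the expected one: the two-term decomposition, the Lipschitz bound $|\langle y^\loz,\overrightarrow{ab}\rangle|\leq\|y^\loz\|_{\loz}\,d(a,b)$ extracted from Proposition \ref{dg} by taking $(c,d)=(z,z)$ (where $\langle y^\loz,\overrightarrow{zz}\rangle=0$), combined with boundedness of $\{x_n\}$, and the reduction of $\langle x^\loz,\overrightarrow{x_nx}\rangle$ to base-point-$x$ pairings $\langle\overrightarrow{xx_n},\overrightarrow{xa_i}\rangle$, $\langle\overrightarrow{xx_n},\overrightarrow{xb_i}\rangle$ via the quasilinearization identities (i)--(iii) together leave no gaps.
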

\begin{definition}{\rm Let $X$ be an Hadamard space with the linear dual space $X^\loze$.
\begin{enumerate}
\item[(i)] A sequence $\{x_n\} \subseteq X$ is \textit{$bw$-convergent} to $x\in X$, if $\{x_n\}$ is bounded and $x_n\overset{w}\longrightarrow x$. In this case, we write $x_n\overset{bw}\longrightarrow x$.

\item[(ii)] A sequence $\{(x_n,x^{\loz}_n)\}\subseteq X \times X^\loze$ is \textit{ $bw\times\|\cdot\|_\loz$-convergent}  to $(x, x^\loz)\in X\times X^\loze$, if $x_n\overset{bw}\longrightarrow x$ and $x^{\loz}_n\xrightarrow{\|\cdot\|_\loz}x^\loz$. In this case, we write $(x_n, x^\loz_n)\xrightarrow{bw\times\|\cdot\|_\loz} (x, x^\loz)$.
\item[(iii)]  A subset $M$ of  $X \times X^\loze$ is called \textit{sequentially $bw\times\|\cdot\|_\loz$-closed} if the limit of every $bw\times\|\cdot\|_\loz$-convergent sequence $\{(x_n,x^{\loz}_n)\}\subseteq M$  is in $M$.
\item[(iv)] A mapping $\varphi : X\times X^\loze \rightarrow\, ]-\infty,\infty]$ is  \textit{sequentially $bw\times\|\cdot\|_\loz$-continuous at $(x, x^\loz)\!\in\!X\!\times\!X^\loze$} if for every $\{(x_n,x^{\loz}_n)\}\!\subseteq\!X\!\times\!X^\loze$, with $(x_n, x^\loz_n)\!\xrightarrow{bw\times\|\cdot\|_\loz}\!(x, x^\loz)$ we have $\varphi(x_n,x^{\loz}_n)\rightarrow\varphi(x,x^{\loz})$. Also, $\varphi$ is \textit{sequentially $bw\times\|\cdot\|_\loz$-continuous}  if it is sequentially $bw\times\|\cdot\|_\loz$-continuous at each point of $X\times X^\loze$.
\end{enumerate}
}\end{definition}

\begin{definition}{\rm Let $X$ be an Hadamard space with linear dual space $X^\loze$. For an arbitrary point $p \in X$, we define the \textit{$p$-coupling function} of the dual pair $(X,X^\loze)$ by \[\pi_p:X \times X^\loze \rightarrow \mathbb{R};~(x, x^{\loz})\mapsto \langle x^{\loz}, \overrightarrow{px}\rangle.\]
}\end{definition}
This function is useful in the formulation of some basic results of the monotone sets in Hadamard spaces (see  Proposition \ref{mumon}).
\begin{proposition}\label{pictsconvex} Let $X$ be an Hadamard space with linear dual space $X^\loze$ and $p\in X$. Then the following hold:
\begin{enumerate}
\item[(i)] $\pi_p$ is sequentially $bw\times\|\cdot\|_\loz$-continuous and hence it is continuous.
\item[(ii)] $\pi_p$ is  linear with respect to it's second variable.
 \end{enumerate}
\end{proposition}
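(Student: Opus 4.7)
The plan is to deduce both parts of Proposition \ref{pictsconvex} directly from material already established in Section 1: part (i) will come from Proposition \ref{loz1} after a small sign manipulation, and part (ii) will follow tautologically from the definition of how $X^\loze=\spa X^*$ acts on bound vectors.

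For (i), I would fix a sequence $\{(x_n,x_n^\loz)\}\subseteq X\times X^\loze$ with $(x_n,x_n^\loz)\xrightarrow{bw\times\|\cdot\|_\loz}(x,x^\loz)$. By definition this means that $\{x_n\}$ is bounded, $x_n\xrightarrow{w}x$, and $\|x_n^\loz-x^\loz\|_\loz\to 0$. The only mismatch with Proposition \ref{loz1} is that the latter keeps the moving index on the \emph{tail} of the bound vector, $\overrightarrow{x_nz}$, whereas $\pi_p(x_n,x_n^\loz)=\langle x_n^\loz,\overrightarrow{px_n}\rangle$ has the index on the \emph{head}. Combining properties (i) (symmetry) and (ii) (antisymmetry in the first slot) of the quasilinearization and extending by linearity through $\langle [t\overrightarrow{ab}],\overrightarrow{xy}\rangle=t\langle\overrightarrow{ab},\overrightarrow{xy}\rangle$ and the displayed linear-combination convention, one obtains $\langle y^\loz,\overrightarrow{uv}\rangle=-\langle y^\loz,\overrightarrow{vu}\rangle$ for every $y^\loz\in X^\loze$ and $u,v\in X$. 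Hence $\pi_p(x_n,x_n^\loz)=-\langle x_n^\loz,\overrightarrow{x_np}\rangle$, and Proposition \ref{loz1} applied with $z=p$ forces this quantity to tend to $-\langle x^\loz,\overrightarrow{xp}\rangle=\langle x^\loz,\overrightarrow{px}\rangle=\pi_p(x,x^\loz)$, which is the claimed sequential $bw\times\|\cdot\|_\loz$-continuity. The ``hence continuous'' clause will follow immediately, since metric convergence in $X$ implies both boundedness and weak convergence (as noted right after the definition of weak convergence), so any sequence convergent in the product of the metric on $X$ and the norm on $X^\loze$ is automatically $bw\times\|\cdot\|_\loz$-convergent with the same limit, and continuity reduces here to sequential continuity.

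For (ii), with $x\in X$ fixed one has $\pi_p(x,x^\loz)=\langle x^\loz,\overrightarrow{px}\rangle$ by definition, and the convention
\[
\sBigg\langle\sum_{i=1}^n\alpha_i x_i^*,\overrightarrow{xy}\sBigg\rangle:=\sum_{i=1}^n\alpha_i\sbig\langle x_i^*,\overrightarrow{xy}\sbig\rangle
\]
adopted in the preliminaries is precisely $\mathbb{R}$-linearity in the first slot of the pairing; specialising the second slot to $\overrightarrow{px}$ gives $\pi_p(x,\alpha x^\loz+\beta y^\loz)=\alpha\pi_p(x,x^\loz)+\beta\pi_p(x,y^\loz)$ for all $x^\loz,y^\loz\in X^\loze$ and $\alpha,\beta\in\mathbb{R}$.

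There is no genuine obstacle: the analytic content sits entirely inside Proposition \ref{loz1}. What does demand a little care is the bookkeeping of the sign flip $\overrightarrow{px_n}\leftrightarrow\overrightarrow{x_np}$, and the verification that this flip is legitimate not only for bound vectors (where it follows at once from properties (i) and (ii)) but also for arbitrary elements of $X^\loze$, which is why one has to pass through the formula $\langle [t\overrightarrow{ab}],\cdot\rangle=t\langle\overrightarrow{ab},\cdot\rangle$ and the linear-combination convention before invoking Proposition \ref{loz1}.
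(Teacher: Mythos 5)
Your proof is correct and follows essentially the same route as the paper: part (i) is Proposition \ref{loz1} applied with $z=p$, and part (ii) is just the linearity convention for the pairing $\langle\sum_i\alpha_i x_i^*,\cdot\rangle$. The one place you go beyond the paper is the explicit sign flip $\langle x_n^\loz,\overrightarrow{px_n}\rangle=-\langle x_n^\loz,\overrightarrow{x_np}\rangle$ needed to match the tail/head orientation in the statement of Proposition \ref{loz1}; the paper silently elides this, so your version is, if anything, slightly more careful.
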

\begin{description}
\item[{\rm\textit{Proof.} (i)}]  Let  $\{ (x_n,x^{\loz}_n)\}\subseteq X \times X^{\loze}$ be such that $(x_n,x^{\loz}_n)\xrightarrow{bw\times\|\cdot\|_\loz} (x,x^\loz)$, where $(x,x^\loz) \in X\times X^\loze$. It follows from Proposition \ref{loz1} that
$\langle x_n^\loz , \overrightarrow{px_n}\rangle \rightarrow  \langle x^\loz , \overrightarrow{px}\rangle,$
which implies that $\pi_p$ is sequentially $bw\times\text{$\|\cdot\|_\loz$}$-continuous. Now, since convergence in the metric implies weak convergence, we conclude that $\pi_p$ is continuous.
\item[{\rm(ii)}]  Let $x^\loz,y^\loz \in X^\loze$, $x \in X$ and $\alpha, \beta\in\mathbb{R}$. Then
\begin{align*}
\pi_p(x,\alpha x^{\loz}+\beta y^{\loz})&=\langle \alpha x^{\loz}+\beta y^{\loz},\overrightarrow{px} \rangle=\alpha\langle x^{\loz},\overrightarrow{px} \rangle+\beta\langle y^{\loz},\overrightarrow{px} \rangle=\alpha\pi_p(x,x^{\loz})+\beta\pi_p(x,y^{\loz}).
\end{align*}\end{description}
The proof is completed. \hfill\qed

\begin{definition}{\rm \cite[Section 2.2]{Bacak2014}\label{lscepi} Suppose that $(X,d)$ is an Hadamard space and $f : X \rightarrow ]-\infty,\infty]$ be a function.
\begin{enumerate}
\item[(i)] The \textit{domain} of f is defined by
$\dom (f):=\{x \in X : f(x) <\infty \}.$
Moreover, $f$ is called \textit{proper} if $\dom (f)\neq \emptyset$.
\item[(ii)] $f$ is \textit{lower semi-continuous} (briefly \textit{l.s.c.}) if the set $\{x\in X:f(x)\leq \alpha\}$ is closed, for each $\alpha \in \mathbb{R}$.
\item[(iii)] $f$ is \textit{convex} if
\[f((1-\lambda)x \oplus \lambda y) \leq (1-\lambda)f(x)+\lambda f(y),~ \text{for each}~x,y \in X~\text{and}~\lambda \in [0,1].\]
\item[(iv)] $f$ is \textit{strongly convex} with parameter $\kappa >0$ if,
\[f((1-\lambda)x \oplus \lambda y) \leq (1-\lambda)f(x)+\lambda f(y)-\kappa \lambda(1-\lambda)d(x,y)^2,\]
whenever $x,y \in X$ and $\lambda \in [0,1]$.
\item[(v)] When $f$ is proper, an element $x\in X$ is said to be a \textit{minimizer} of $f$, if $f(x)=\inf_{z\in X} f(z)$.
\end{enumerate}
}\end{definition}
The set of all proper, l.s.c. and convex extended real-valued functions on $X$ is denoted by $\Gamma(X)$.

\begin{definition} \label{FDEF}{\rm Let $(X,d)$ be an Hadamard space with linear dual space $X^\loze$ and $f:X\rightarrow ]-\infty,\infty]$ is a function. Define the mapping $\mathbb{I}_f:X \times X^\loze \times X^\loze \rightarrow [-\infty,\infty]$ by
$\mathbb{I}_f(x,x^\loz,y^\loz)=\inf_{y \in X} \big\{f(y)+\pi_y(x,x^\loz+y^\loz)\big\}$.
}\end{definition}
\begin{remark}\label{gtyu} For each $x\in X$ and each $x^\loz,y^\loz,u^\loz,v^\loz \in X^\loze$, we have:
\begin{enumerate}
\item[(i)] $\mathbb{I}_f(x,x^\loz,y^\loz)=\mathbb{I}_f(x,y^\loz,x^\loz)$.
\item[(ii)] $\mathbb{I}_f(x,x^\loz,y^\loz)=\mathbb{I}_f(x,u^\loz,v^\loz)$, provided that $x^\loz+y^\loz=u^\loz+v^\loz$.
\item[(iii)] $\mathbb{I}_f(x,x^\loz,y^\loz)=\mathbb{I}_f(x,\mathbf{0}_{X^\loze}, x^\loz+y^\loz)$.
\end{enumerate}
\end{remark}
\begin{definition} \label{mfd} {\rm Let $f:X \rightarrow ]-\infty,\infty]$ be a function where $X$ is an Hadamard space with the linear dual space $X^\loze$. For any $y^\loz \in X^\loze$, set
\[M^f_{y^\loz}:=\big\{(x,x^\loz) \in X \times X^\loze:~\mathbb{I}_f(x,x^\loz,y^\loz) \geq f(x)\big\}.\]
We use the notation $M^f_{\mathbf{0}}:=M^f_{\mathbf{0}_{X^\loze}}$.
}\end{definition}
\begin{lemma}\label{mtransl}  Let $X$ be an Hadamard space with linear dual space $X^\loze$, $p \in X$ and $y^\loz \in X^\loze$. Then
\[M^f_{y^\loz}=\big\{(x,x^\loz-y^\loz) \in X \times X^\loze:(x,x^\loz) \in M^f_{\mathbf{0}} \big\}=M^{\tilde{f}}_{\mathbf{0}},\]
where,  $\tilde{f}(\cdot):=f(\cdot)-\pi_p(\cdot, y^\loz)$.
\end{lemma}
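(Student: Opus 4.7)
The plan is to unpack the definitions of $\mathbb{I}_f$, $M^f_{y^\loz}$ and $\tilde{f}$, and reduce both equalities to two identities that are already available: for the first equality, the translation-invariance of $\mathbb{I}_f$ in its last two slots (Remark~\ref{gtyu}(ii)); for the second, the additivity property (iii) of the quasilinearization, extended to $X^\loze$ by linearity. I read the middle set with the apparent typo corrected, i.e.\ as $\{(x,x^\loz-y^\loz):(x,x^\loz)\in M^f_{\mathbf 0}\}$, since this is the only meaningful reading (the outer pair must live in $X\times X^\loze$).

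For the first equality, I would show the equivalent statement $(x,z^\loz)\in M^f_{y^\loz}\ \Longleftrightarrow\ (x,z^\loz+y^\loz)\in M^f_{\mathbf 0}$. By Definition~\ref{mfd}, the left side is $\mathbb{I}_f(x,z^\loz,y^\loz)\geq f(x)$, and Remark~\ref{gtyu}(ii) gives $\mathbb{I}_f(x,z^\loz,y^\loz)=\mathbb{I}_f(x,z^\loz+y^\loz,\mathbf{0}_{X^\loz})$, which is precisely the condition $(x,z^\loz+y^\loz)\in M^f_{\mathbf 0}$. Substituting $x^\loz:=z^\loz+y^\loz$ rewrites this as the claimed parameterization.

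For the second equality, I would compute $M^{\tilde f}_{\mathbf 0}$ directly. Unpacking the definition of $\mathbb{I}_{\tilde f}$ with $\tilde f(y)=f(y)-\langle y^\loz,\overrightarrow{py}\rangle$ gives
\[\mathbb{I}_{\tilde f}(x,x^\loz,\mathbf{0}_{X^\loz})=\inf_{y\in X}\bigl\{f(y)-\langle y^\loz,\overrightarrow{py}\rangle+\langle x^\loz,\overrightarrow{yx}\rangle\bigr\}.\]
The key step is the telescoping identity $\langle y^\loz,\overrightarrow{py}\rangle+\langle y^\loz,\overrightarrow{yx}\rangle=\langle y^\loz,\overrightarrow{px}\rangle$, which is property (iii) of the quasilinearization (extended to $y^\loz\in X^\loze$ through the linear definition of $X^\loze$). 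Using it, the bracket rewrites as $f(y)+\langle x^\loz+y^\loz,\overrightarrow{yx}\rangle-\langle y^\loz,\overrightarrow{px}\rangle$, and pulling the $x$-constant out of the infimum yields
\[\mathbb{I}_{\tilde f}(x,x^\loz,\mathbf{0}_{X^\loz})=\mathbb{I}_f(x,x^\loz,y^\loz)-\pi_p(x,y^\loz).\]
Since $\tilde f(x)=f(x)-\pi_p(x,y^\loz)$, the same additive constant appears on both sides of the defining inequality for $M^{\tilde f}_{\mathbf 0}$ and cancels, leaving $\mathbb{I}_f(x,x^\loz,y^\loz)\geq f(x)$, which is exactly $(x,x^\loz)\in M^f_{y^\loz}$.

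There is no real obstacle here; the whole argument is bookkeeping. The only point requiring attention is orientation of bound vectors when applying the telescoping identity, and verifying (once) that property (iii) extends from $X^2\times X^2$ to evaluations against arbitrary $y^\loz\in X^\loze$, which is immediate from the linear structure of $X^\loze={\rm span}\,X^\ast$.
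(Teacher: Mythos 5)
Your proof is correct and takes essentially the same route as the paper's: the first equality via the invariance of $\mathbb{I}_f$ under redistributing its dual arguments (Remark~\ref{gtyu}), and the second via the telescoping identity $\langle y^\loz,\overrightarrow{pz}\rangle+\langle y^\loz,\overrightarrow{zx}\rangle=\langle y^\loz,\overrightarrow{px}\rangle$, which turns $\tilde f(z)+\pi_z(x,x^\loz)$ into $f(z)+\pi_z(x,x^\loz+y^\loz)$ up to the $z$-independent constant $\pi_p(x,y^\loz)$ that cancels against $\tilde f(x)-f(x)$. Your reading of the misprinted middle set as $\{(x,x^\loz-y^\loz):(x,x^\loz)\in M^f_{\mathbf 0}\}$ is exactly the one the paper's own proof uses.
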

\begin{proof} By using Remark \ref{gtyu}(i)\&(iii), we get:
\begin{align*}
(x,x^\loz-y^\loz)\in M^f_{y^\loz} &\Longleftrightarrow \mathbb{I}_f(x,x^\loz-y^\loz,y^\loz) \geq f(x)\Longleftrightarrow \mathbb{I}_f(x,x^\loz,\mathbf{0}_{X^\loze}) \geq f(x)\Longleftrightarrow (x,x^\loz) \in M^f_{\mathbf{0}}.
\end{align*}
On the other hand, for each  $z\in X$ and $(x,x^\loz)\in X\times X^\loze$,
\begin{align*}
\tilde{f}(z)+\pi_z(x,x^\loz) -\tilde{f}(x) &=f(z)-\langle y^\loz,\overrightarrow{pz}\rangle+\pi_z(x,x^\loz) - f(x)+\langle y^\loz,\overrightarrow{px}\rangle\\
&=f(z)+\langle y^\loz,\overrightarrow{zx}\rangle+\pi_z(x,x^\loz)-f(x)\\
&=f(z)+\pi_z(x,x^\loz+y^\loz)-f(x).
\end{align*}
Now, by taking the infimum over  $z\in X$, we obtain $\mathbb{I}_{\tilde{f}}(x,x^\loz-y^\loz,y^\loz)-\tilde{f}(x)=\mathbb{I}_f(x,x^\loz,y^\loz)-f(x).$ 
This implies that $M^f_{y^\loz}=M^{\tilde{f}}_{\mathbf{0}}$.
\end{proof}

\section{Flat Hadamard spaces}

 Let $X$ be an Hadamard space with linear dual space $X^\loze$ and $M\subseteq X\times X^\loze$.
The \textit{domain} and \textit{range} of $M$ are defined, respectively, by
\begin{equation*}
\Dom(M):=\{ x\in X : \exists\, x^\loz\in X^\loze ~\text{s.t.}~ (x, x^\loz)\in M\},
\end{equation*}
and
\begin{equation*}
\Ran(M):=\{ x^\loz\in X^\loze : \exists\, x\in X ~\text{s.t.}~ (x, x^\loz)\in M\}.
\end{equation*}

\begin{definition}\label{zsd}{\rm  Let $X$ be an Hadamard space with linear dual space $X^\loze$ and $p\in X$ be fixed. We say that $ M \subseteq X \times X^\loze$ satisfies \textit{$\mathcal{F}_l$-property} if for each $\lambda \in [0,1],\, x^\loz \in\Ran(M)$ and each $x,y \in\Dom(M)$,
\begin{equation}\label{EqWp}
\big \langle x^\loz,\overrightarrow{p((1-\lambda)x\oplus \lambda y)} \big \rangle \leq (1-\lambda)\langle x^\loz,\overrightarrow{px}\rangle +\lambda \langle x^\loz,\overrightarrow{py} \rangle.
\end{equation}
}\end{definition}
\begin{remark} Note that
\begin{enumerate}
\item[(i)] \text{$\mathcal{F}_l$-property} is  introduced and investigated in \cite{MR01} as \text{$\mathcal{W}$-property}.

\item[(ii)] $\mathcal{F}_l$-property is independent of the choice of the point $p$ (see \cite[Proposition 2.2]{MR01}).
\end{enumerate}
\end{remark}

\begin{theorem} \cite[Proposition 2.5]{MR01} \label{frtg}The following statements for an Hadamard space $X$ are equivalent.
\begin{enumerate}
\item[(i)] $X$  is flat.

\item[{(ii)}] $\langle \overrightarrow{x((1-\lambda)x\oplus \lambda y)},\overrightarrow{ab} \rangle=\lambda \langle \overrightarrow{xy},\overrightarrow{ab} \rangle$, for all $a,b,x,y\in X$ and all $\lambda\in[0,1]$.

\item[(iii)] $X\times X^\loze$ has $\mathcal{F}_l$-property.

\item[(iv)] Any subset of $X\times X^\loze$ has $\mathcal{F}_l$-property.
 \end{enumerate}
  \end{theorem}
The following example shows that there exists a relation $M \subseteq X \times X^\loze$, in a non-flat Hadamard space $X$, which doesn't have the $\mathcal{F}_l$-property. Moreover, it is easy to check that in any Hadamard space $(X,d)$ with linear dual space $X^\loze$, for each $(x,x^\loz)\in X\times X^\loze$, the singleton set $\{(x,x^\loz)\}$ has $\mathcal{F}_l$-property.
\begin{example}{\rm (Compare with \cite[Example 2.6]{MR01})} \label{ExMonS}{\rm  Consider the following equivalence relation on $\mathbb{N} \times [0,1]$:
\[(n,t)\sim (m,s) :\Leftrightarrow t=s=0 ~\text{or} ~(n,t)=(m,s).\]
Set $ X:=\frac{\mathbb{N} \times [0,1]}{\sim} $; i.e., $X$ is  the set of all equivalence classes of $\sim$. Let $ d :X \times X \rightarrow \mathbb{R} $ be defined by
\[
d([(n,t)],[(m,s)])=\begin{cases}
|t-s| &n=m,
\\
t+s  & n\neq m.
\end{cases}
\]
The geodesic joining $x=[(n,t)]$ to $y=[(m,s)] $ is defined as follows:
\[ (1-\lambda)x \oplus \lambda y:=\begin{cases}
[(n,(1-\lambda)t-\lambda s)] & 0 \leq \lambda \leq \frac{t}{t+s},
\\[1mm]
[(m,(\lambda-1)t+\lambda s)]   & \frac{t}{t+s} \leq  \lambda \leq 1,
\end{cases}
\]
whenever, $x\neq y$ and vacuously $(1-\lambda)x \oplus \lambda x:=x$. It is known that (see \cite[Example 4.7]{Kakavandi}) $(X,d)$ is an $\mathbb{R}$-tree space. It follows from \cite[Example 1.2.10]{Bacak2014} that $\mathbb{R}$-tree spaces are Hadamard space.
 Let $x=[(2,\frac{1}{2})]$, $ y=[(1,\frac{1}{2})] $, $ a=[(3,\frac{1}{3})] $ and $b= [(2,\frac{1}{2})] $. Then
 \[ (1-\lambda)[(2,\sfrac{1}{2})] \oplus \lambda [(1,\sfrac{1}{2})]=\begin{cases}
[(2,\frac{1}{2}-\lambda)] & 0 \leq \lambda \leq \frac{1}{2},
\\[1mm]
[(1,\lambda-\frac{1}{2})]   & \frac{1}{2} \leq  \lambda \leq 1.
\end{cases}
\]
For each $\lambda\in (0,\frac{1}{2}]$ we obtain $\big\langle \overrightarrow{x((1-\lambda)x\oplus \lambda y)},\overrightarrow{ab} \big\rangle=\bigg\langle \overrightarrow{[(2,\frac{1}{2})][(2,\frac{1}{2}-\lambda)]},\overrightarrow{[(3,\frac{1}{3})][(2,\frac{1}{2})] }\bigg \rangle=-\frac{5}{6}\lambda,$
while
$\lambda\langle \overrightarrow{xy},\overrightarrow{ab}\rangle=-\frac{1}{2}\lambda$.
It follows from Theorem \ref{frtg} that  $(X,d)$ is not a flat Hadamard space.
For each $n\in \mathbb{N}$, set $x_n:=[(n,\frac{1}{2})]$ and  $y_n:=[(n,\frac{1}{n})]$. Now,  define  $M:=\big\{(x_n,[\overrightarrow{y_{n+1}y_n}]):n\in \mathbb{N}\big\}\subseteq X\times X^\loze.$
Take $\lambda={1}/{3}$, $p=[(1,1)] \in X$ and $[\overrightarrow{y_5y_4}] \in \Ran(M)$. Clearly, $(1-\lambda)x_1 \oplus \lambda x_3=[(1,\frac{1}{6})]$ and
$\big \langle[\overrightarrow{y_5y_4}],\overrightarrow{p((1-\lambda)x_1 \oplus \lambda x_3)} \big \rangle=\frac{1}{24},$
while
$\frac{2}{3}\langle [\overrightarrow{y_5y_4}],\overrightarrow{px_1}\rangle +\frac{1}{3} \langle [\overrightarrow{y_5y_4}],\overrightarrow{px_3} \rangle=\frac{1}{40}.$
Therefore, $M $ doesn't have  the $\mathcal{F}_l$-property.}
\end{example}

\begin{lemma} {\rm\cite[Proposition 2.2.17]{Bacak2014}} \label{minconv} Suppose $(X,d)$ is an Hadamard space and $f : X \rightarrow ]-\infty,\infty]$ be  l.s.c. and strongly convex with  $\kappa >0$. Then there exists a unique minimizer $x\in X$ of $f$ and each minimizing sequence converges to $x$. Moreover,
$f(x)+\kappa d(x,y)^2 \leq f(y),~ \textit{for each}~ y\in X.$
\end{lemma}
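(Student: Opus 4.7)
The plan proceeds in four short stages, each a direct application of the strong convexity inequality, most often at the midpoint $\lambda=\tfrac{1}{2}$. Set $\mu:=\inf_{z\in X}f(z)$; since a minimizer is sought, $f$ is implicitly proper and $\mu<\infty$.

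First, for uniqueness, I would suppose $x_{1},x_{2}\in X$ both attain $\mu$ and apply strong convexity with $\lambda=\tfrac{1}{2}$ to $w:=\tfrac{1}{2}x_{1}\oplus\tfrac{1}{2}x_{2}$:
\[
\mu\le f(w)\le\tfrac{1}{2}f(x_{1})+\tfrac{1}{2}f(x_{2})-\tfrac{\kappa}{4}d(x_{1},x_{2})^{2}=\mu-\tfrac{\kappa}{4}d(x_{1},x_{2})^{2},
\]
which forces $d(x_{1},x_{2})=0$.

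Second, for existence, I would take a minimizing sequence $\{y_{n}\}\subseteq X$ with $f(y_{n})\to\mu$, and apply strong convexity at the midpoint of $y_{n}$ and $y_{m}$. Since the midpoint value is at least $\mu$, rearranging gives
\[
\tfrac{\kappa}{4}\,d(y_{n},y_{m})^{2}\le\tfrac{1}{2}(f(y_{n})-\mu)+\tfrac{1}{2}(f(y_{m})-\mu)\longrightarrow 0,
\]
so $\{y_{n}\}$ is Cauchy; by completeness of the Hadamard space $X$, $y_{n}\to x$ for some $x\in X$, and lower semicontinuity yields $f(x)\le\liminf_{n}f(y_{n})=\mu$, so $x$ is the (unique) minimizer. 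For the assertion that \emph{every} minimizing sequence converges to $x$, a slicker variant is available: apply strong convexity to $\tfrac{1}{2}x\oplus\tfrac{1}{2}y_{n}$ to obtain $\tfrac{\kappa}{4}d(x,y_{n})^{2}\le\tfrac{1}{2}(f(y_{n})-\mu)\to 0$ directly, bypassing the Cauchy argument.

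Third, for the quadratic growth estimate, I would fix $y\in X$ (trivial if $f(y)=\infty$), take $\lambda\in(0,1]$ and set $z_{\lambda}:=(1-\lambda)x\oplus\lambda y$. Since $x$ minimizes,
\[
f(x)\le f(z_{\lambda})\le(1-\lambda)f(x)+\lambda f(y)-\kappa\lambda(1-\lambda)\,d(x,y)^{2};
\]
transposing $(1-\lambda)f(x)$, dividing by $\lambda$, and letting $\lambda\to 0^{+}$ delivers $f(x)+\kappa d(x,y)^{2}\le f(y)$. There is no serious obstacle: the whole argument rests on a single computation (strong convexity at the midpoint) combined with completeness and lower semicontinuity. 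The only point requiring care is to invoke lower semicontinuity at exactly the right moment, so that the metric convergence $y_{n}\to x$ is upgraded to the value inequality $f(x)\le\mu$.
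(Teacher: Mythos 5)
The paper offers no proof of this lemma—it is imported verbatim from \cite[Proposition 2.2.17]{Bacak2014}—and your argument is exactly the standard one from that source: midpoint strong convexity makes every minimizing sequence Cauchy (yielding existence, uniqueness, and convergence of all minimizing sequences in one stroke), lower semicontinuity identifies the limit as the minimizer, and the growth estimate follows by applying strong convexity along the geodesic from $x$ to $y$, dividing by $\lambda$, and letting $\lambda\to 0^{+}$. The one point you leave implicit is that $\mu=\inf_X f>-\infty$ (you only record $\mu<\infty$); without this, the bound $\tfrac{\kappa}{4}d(y_n,y_m)^2\le\tfrac12\big(f(y_n)-\mu\big)+\tfrac12\big(f(y_m)-\mu\big)$ is vacuous. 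This is easily patched: if $f(w_j)\to-\infty$ with $\{w_j\}$ bounded, iterating midpoints with a fixed $x_0\in\dom f$ produces points converging to $x_0$ on which $f\to-\infty$, contradicting lower semicontinuity at $x_0$; and if $d(x_0,w_j)\to\infty$, the point of $[x_0,w_j]$ at unit distance from $x_0$ has value at most $(1-\lambda_j)f(x_0)+\lambda_jf(w_j)-\kappa(1-\lambda_j)d(x_0,w_j)\to-\infty$ with $\lambda_j=1/d(x_0,w_j)$, reducing to the bounded case. With that one-line addendum the proof is complete and coincides with the cited one.
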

\begin{lemma}\label{pictsonlyconvex} Let $X$ be an Hadamard space with linear dual space $X^\loze$, $p\in X$ and $y^\loz\in X^\loze$. Then  $X\times\{y^\loz\}$ has $\mathcal{F}_l$-property if and only if $\pi_p(\cdot,y^\loz)$ is convex.
\end{lemma}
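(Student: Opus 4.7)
The plan is to observe that the statement is essentially a tautology once one unwinds both sides against the definition of $\pi_p$. Concretely, for the set $M := X\times\{y^\loz\}\subseteq X\times X^\loze$ we have $\Dom(M)=X$ and $\Ran(M)=\{y^\loz\}$, so the universal quantifier on $x^\loz\in\Ran(M)$ in Definition \ref{zsd} collapses to the single choice $x^\loz=y^\loz$. Hence the $\mathcal{F}_l$-property for $X\times\{y^\loz\}$ reduces to the single condition
\[
\big\langle y^\loz,\overrightarrow{p((1-\lambda)x\oplus\lambda y)}\big\rangle\le (1-\lambda)\langle y^\loz,\overrightarrow{px}\rangle+\lambda\langle y^\loz,\overrightarrow{py}\rangle,
\]
required for all $x,y\in X$ and $\lambda\in[0,1]$.

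Next I would translate each of the three brackets in this inequality through the definition of the $p$-coupling function, namely $\pi_p(z,y^\loz)=\langle y^\loz,\overrightarrow{pz}\rangle$. The left-hand side becomes $\pi_p((1-\lambda)x\oplus\lambda y,\,y^\loz)$ and the right-hand side becomes $(1-\lambda)\pi_p(x,y^\loz)+\lambda\pi_p(y,y^\loz)$. This is exactly the definition of convexity of the function $z\mapsto \pi_p(z,y^\loz)$ on $X$ (compare Definition \ref{lscepi}(iii)), so the two conditions are literally the same statement.

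Since both implications follow from the same identification, a single chain of equivalences suffices, with no need for a separate argument for each direction; in particular, the independence of the $\mathcal{F}_l$-property from the choice of the base point $p$ (observed in the Remark following Definition \ref{zsd}) guarantees that working with the fixed $p$ from the hypothesis is harmless. There is no real obstacle here — the only thing to be careful about is confirming that $\Ran(X\times\{y^\loz\})$ is indeed the singleton $\{y^\loz\}$, so that no other dual vectors enter the inequality and the reduction to pure convexity of $\pi_p(\cdot,y^\loz)$ is complete.
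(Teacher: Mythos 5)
Your proof is correct and follows essentially the same route as the paper: both simply note that for $M=X\times\{y^\loz\}$ the $\mathcal{F}_l$-inequality \eqref{EqWp} collapses (since $\Ran(M)=\{y^\loz\}$ and $\Dom(M)=X$) to the statement that $z\mapsto\pi_p(z,y^\loz)=\langle y^\loz,\overrightarrow{pz}\rangle$ is convex, so the two conditions are definitionally identical. Your added remark on the base-point independence of the $\mathcal{F}_l$-property is a harmless extra observation not needed by the paper, since $p$ is fixed throughout.
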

\begin{proof} Let $a,b \in X$ and $\lambda \in [0,1]$. Then
\begin{align}\label{Rump1}
\pi_p((1-\lambda)a\oplus\lambda b,y^\loz )=\langle y^\loz,\overrightarrow{p((1-\lambda)a\oplus\lambda b)} \rangle,
\end{align}
and
\begin{align}\label{Rump2}
(1-\lambda)\pi_p(a,y^\loz)+\lambda\pi_p(b,y^\loz)=(1-\lambda) \langle y^\loz,\overrightarrow{pa} \rangle +\lambda \langle y^\loz,\overrightarrow{pb} \rangle.
\end{align}
Now, inequalities \eqref{Rump1},  \eqref{Rump2} and \eqref{EqWp} imply that $X\times\{y^\loz\}$ has $\mathcal{F}_l$-property if and only if $\pi_p(\cdot,y^\loz)$ is convex.
\end{proof}
\begin{proposition} \label{cpthg} Let $(X,d)$ be an Hadamard space with linear dual space $X^\loze$ and $f\in\Gamma(X)$. Let  $p,y \in X$ be fixed and arbitrary and let $y^\loz \in X^\loze$ be such that $X\times\{y^\loz\}$ has $\mathcal{F}_l$-property. Then
\begin{enumerate}
\item[(i)] the mapping $g:X \rightarrow  ]-\infty,\infty]$ defined by $g(x)=f(x)+\pi_p(x,y^\loz)$ is proper, l.s.c. and convex.
\item[(ii)] define the mapping $h:X \rightarrow  ]-\infty,\infty]$ by $h(x)=g(x)+\frac{1}{2}d(x,y)^2$. Then $h$ is proper, l.s.c. and strongly convex with the parameter $\kappa=\frac{1}{2}$. Moreover, $h$ has a unique minimizer $x\in X$ such that $ h(z) \geq h(x)+\frac{1}{2}d(x,z)^2$, for each $z\in X$.
\end{enumerate}
\end{proposition}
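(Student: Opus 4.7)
My plan is to handle (i) by verifying properness, lower semicontinuity, and convexity of $g = f + \pi_p(\cdot, y^\loz)$ separately. For properness, since $\pi_p(x, y^\loz) = \langle y^\loz, \overrightarrow{px}\rangle \in \mathbb{R}$ for every $x \in X$, the domain of $g$ coincides with $\dom(f)$, which is nonempty because $f \in \Gamma(X)$. For lower semicontinuity, Proposition \ref{pictsconvex}(i) gives that $\pi_p$ is continuous on $X \times X^\loze$; freezing the second coordinate at $y^\loz$ yields a continuous function $\pi_p(\cdot, y^\loz)\colon X \to \mathbb{R}$, and the sum of a continuous function with an l.s.c.\ function is l.s.c. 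For convexity, the hypothesis that $X \times \{y^\loz\}$ has the $\mathcal{F}_l$-property, together with Lemma \ref{pictsonlyconvex}, gives convexity of $\pi_p(\cdot, y^\loz)$; since the sum of two convex functions is convex, (i) follows.

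For (ii), properness and lower semicontinuity of $h = g + \tfrac{1}{2}d(\cdot, y)^2$ follow from (i) together with the continuity and finiteness of $d(\cdot, y)^2$. The main point is strong convexity. The CN-inequality \eqref{cn} asserts precisely that $\varphi(x) := d(x, y)^2$ satisfies
\[
\varphi((1-\lambda)a \oplus \lambda b) \leq (1-\lambda)\varphi(a) + \lambda\varphi(b) - \lambda(1-\lambda)d(a,b)^2,
\]
for all $a,b \in X$ and $\lambda \in [0,1]$, so $\varphi$ is strongly convex with parameter $1$ and $\tfrac{1}{2}\varphi$ is strongly convex with parameter $\kappa = \tfrac{1}{2}$. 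Since adding a convex function to a strongly convex function with parameter $\kappa$ preserves strong convexity with the same parameter (just sum the defining inequalities), the function $h$ is strongly convex with $\kappa = \tfrac{1}{2}$. Existence and uniqueness of the minimizer $x \in X$ and the estimate $h(z) \geq h(x) + \tfrac{1}{2}d(x,z)^2$ for every $z \in X$ are then immediate from Lemma \ref{minconv}.

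I do not anticipate a real obstacle here: the entire argument is a matter of assembling the prior results in the paper, namely Proposition \ref{pictsconvex}(i) for continuity of the coupling function, Lemma \ref{pictsonlyconvex} to convert the $\mathcal{F}_l$-property of $X \times \{y^\loz\}$ into convexity of $\pi_p(\cdot, y^\loz)$, the CN-inequality for strong convexity of the squared distance, and Lemma \ref{minconv} for the final minimization conclusion. The only minor care required is the observation that convex plus strongly convex is strongly convex with the same parameter, which is a direct termwise addition of inequalities.
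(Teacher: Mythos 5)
Your proposal is correct and follows essentially the same route as the paper: Proposition \ref{pictsconvex}(i) for lower semicontinuity, Lemma \ref{pictsonlyconvex} for convexity of $\pi_p(\cdot,y^\loz)$, the CN-inequality \eqref{cn} combined with convexity of $g$ for strong convexity of $h$ with $\kappa=\tfrac{1}{2}$, and Lemma \ref{minconv} for the minimizer. The only cosmetic difference is that you phrase the last step as ``convex plus strongly convex is strongly convex,'' whereas the paper writes out the termwise addition of the two inequalities explicitly; these are the same argument.
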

\begin{description}
\item[{\rm \textit{Proof}. (i)}]   It is clear that $g$ is proper. Lower semi-continuity of $g$ follows from lower semi-continuity of $f$ and Proposition \ref{pictsconvex}(i).
It follows from Lemma \ref{pictsonlyconvex} and  convexity of $f$ that $g$ is convex.
\item[{\rm(ii)}]  Similar to the proof of (i), $h$ is proper and l.s.c. Let $a,b \in X$ and $\lambda \in [0,1]$. By using the convexity of $g$ and  CN-inequality \eqref{cn}, we get:
\begin{align*}
h(&(1-\lambda)a\oplus \lambda b)=g((1-\lambda)a\oplus \lambda b)+\sfrac{1}{2}d((1-\lambda)a\oplus \lambda b,y)^2\\
&\leq (1-\lambda)g(a)+\lambda g(b)+\sfrac{1}{2}\big((1-\lambda)d(a,y)^2+\lambda d(b,y)^2
-\lambda(1-\lambda)d(a,b)^2\big)\\
&=(1-\lambda)(g(a)+\sfrac{1}{2}d(a,y)^2)+\lambda (g(b)+\sfrac{1}{2}d(b,y)^2)
-\sfrac{1}{2}\lambda(1-\lambda)d(a,b)^2\\
&=(1-\lambda)h(a)+\lambda h(b)-\sfrac{1}{2}\lambda(1-\lambda)d(a,b)^2.
\end{align*}
Thus $h$ is strongly convex with the parameter $\kappa=\frac{1}{2}$. In view of Lemma \ref{minconv}, there exists a unique minimizer $x\in X$ such that $ h(z) \geq h(x)+\frac{1}{2}d(x,z)^2$, for each $z\in X$. This completes the proof.\hfill\qed
\end{description}

\begin{corollary}\label{Corcpthg} Let $(X,d)$ be a flat Hadamard space and $f : X \rightarrow ]-\infty,\infty]$ be proper, l.s.c. and convex. Let  $p,y \in X$ be fixed and arbitrary and $y^\loz \in X^\loze$. Then
\begin{enumerate}
\item[(i)] the mapping $g$, defined in Proposition \ref{cpthg}(i), is proper, l.s.c. and convex.
\item[(ii)]  $h$ (defined in Proposition \ref{cpthg}(ii)) is proper, l.s.c. and strongly convex with the parameter $\kappa=\frac{1}{2}$. Moreover, $h$ has a unique minimizer $x\in X$ such that $ h(z) \geq h(x)+\frac{1}{2}d(x,z)^2$, for each $z\in X$.
\end{enumerate}
\end{corollary}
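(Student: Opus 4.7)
The plan is to reduce Corollary \ref{Corcpthg} directly to Proposition \ref{cpthg} by showing that the flatness hypothesis on $X$ automatically upgrades the pointwise $\mathcal{F}_l$-property assumption on $X\times\{y^\loz\}$ to hold for every choice of $y^\loz\in X^\loze$. In other words, the corollary simply removes the hypothesis ``$X\times\{y^\loz\}$ has $\mathcal{F}_l$-property'' at the cost of assuming that the whole ambient space $X$ is flat.

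First I would invoke Proposition \ref{frtg}: since $X$ is a flat Hadamard space, $X\times X^\loze$ has $\mathcal{F}_l$-property. Next I would observe that this is inherited by each ``horizontal slice'' $X\times\{y^\loz\}$: indeed, $\Dom(X\times\{y^\loz\})=X$ and $\Ran(X\times\{y^\loz\})=\{y^\loz\}\subseteq X^\loze=\Ran(X\times X^\loze)$, so inequality \eqref{EqWp} applied to $X\times X^\loze$, specialized to $x^\loz=y^\loz$, is precisely the inequality required for $X\times\{y^\loz\}$ to satisfy $\mathcal{F}_l$-property. This holds for an arbitrary $y^\loz\in X^\loze$.

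Having established this, both parts of the corollary follow by citing Proposition \ref{cpthg} verbatim, with the same definitions of $g$ and $h$. Part (i) gives that $g$ is proper, l.s.c. and convex, and part (ii) gives that $h=g+\tfrac{1}{2}d(\cdot,y)^2$ is proper, l.s.c. and strongly convex with parameter $\kappa=\tfrac{1}{2}$; the existence of a unique minimizer $x\in X$ together with the quadratic growth estimate $h(z)\geq h(x)+\tfrac{1}{2}d(x,z)^2$ for every $z\in X$ is then a consequence of Lemma \ref{minconv} applied to $h$, exactly as in the proof of Proposition \ref{cpthg}.

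There is no substantive obstacle: the only content of the corollary beyond Proposition \ref{cpthg} is the implication ``$X$ flat $\Rightarrow$ $X\times\{y^\loz\}$ has $\mathcal{F}_l$-property for every $y^\loz$,'' and this is immediate from the characterization in Proposition \ref{frtg} by restricting the second variable.
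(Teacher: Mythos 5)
Your proposal is correct and matches the paper's own proof: both reduce the corollary to Proposition \ref{cpthg} by using Proposition \ref{frtg} to get the $\mathcal{F}_l$-property for $X\times X^\loze$ and then noting that each slice $X\times\{y^\loz\}$ inherits it. Your explicit check via $\Dom$ and $\Ran$ just spells out what the paper states in one line.
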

\begin{proof}
Since $X$ is flat, Theorem \ref{frtg} implies that $X\times X^\loze$ has $\mathcal{F}_l$-property and hence for each $y^\loz\in X^\loze$, $X\times\{y^\loz\}$ as a subset of $X\times X^\loze$ has this property too. Now, Proposition \ref{cpthg} completes the proof.
\end{proof}

\section{Monotonicity from polarity point of view}

The concept of monotone operators in Hadamard spaces is introduced in \cite{KakavandiAmini}. Some properties of monotone operators, their resolvents and proximal point algorithm are discussed in  \cite{ChaipunyaKumam,ZamaniRaeisi,KhatibzadehRanjbar2017}. The notions of monotone sets and maximal monotone sets in Hadamard spaces are introduced in \cite{MR01}.
In this section, fundamental properties of (maximal) monotone sets in Hadamard spaces from polarity point of view are considered. Also,  some important results of  \cite{Martinez-LegazSvaiter} are proved in Hadamard spaces.

\begin{definition} \label{monpol} \rm{ Suppose that $X$ is an Hadamard space with linear dual space $X^\loze$. We say that $(x,x^\loz)\in X\times X^{\loz}$ and $(y,y^\loz) \in X\times X^{\loz}$ are \textit{monotonically related}, if $ \langle x^\loz-y^\loz,\overrightarrow{yx}\rangle\geq0 $ and it is denoted by $(x,x^\loz) \mu (y,y^\loz) $.  It is easy to see that $\mu$ is a reflexive and symmetric relation on $X \times X^\loze$. This motivates that monotonically relatedness  can be defined for a subset $M \subseteq X \times X^\loze $. An element $(x,x^\loz)$ is monotonically related to $M$ if $(x,x^\loz)\mu(y,y^\loz) $, for each $(y,y^\loz) \in M$ and this will be denoted by $(x,x^\loz)\mu M$. Moreover, $M \subseteq X\times X^{\loz}$ is said to be a \textit{monotone set} if every $(x,x^\loz),(y,y^\loz) \in M$ are monotonically related. The \textit{monotone polar} of $M$ is
\[ M^\mu:=\{(x,x^\loz)\in X\times X^{\loz} : (x,x^\loz)\mu M\}.\] \label{pol}}
In addition, we often use the notations $M^{\mu\mu}:=(M^\mu)^\mu$ and $M^{\mu\mu\mu}:=(M^{\mu\mu})^\mu$.
\end{definition}

\begin{example}\label{monloz} Suppose that $(X,d)$ is an Hadamard space with linear dual space $X^\loze$ and $f : X \rightarrow ]-\infty,\infty]$ is an arbitrary function. Let  $(u,u^\loz) \in M^f_{\mathbf{0}}$ and $(v,v^\loz) \in M^f_{\mathbf{0}}$. By the definition of $M^f_{\mathbf{0}}$, we get $\mathbb{I}_f(u,u^\loz,\mathbf{0}_{X^\loze}) \geq f(u)$ and $\mathbb{I}_f(v,v^\loz,\mathbf{0}_{X^\loze}) \geq f(v)$. Hence, for each $y \in X$,
\begin{equation} \label{yu1}
f(y)+\pi_y(u,u^\loz) \geq f(u),
\end{equation}
and
\begin{equation} \label{yv2}
f(y)+\pi_y(v,v^\loz) \geq f(v).
\end{equation}
Now, put $y:=v$ and $y:=u$ in \eqref{yu1} and \eqref{yv2}, respectively, to obtain:
\begin{equation} \label{fd}
\pi_v(u,u^\loz) \geq f(u)-f(v),
\end{equation}
and
\begin{equation} \label{fe}
\pi_u(v,v^\loz) \geq f(v)-f(u).
\end{equation}
Adding inequalities \eqref{fd} and \eqref{fe} and since $\pi_u(v,v^\loz) =-\pi_v(u,v^\loz)$, we get $\pi_v(u,u^\loz)-\pi_v(u,v^\loz)\geq 0,$
this means that $\langle u^\loz-v^\loz,\overrightarrow{vu}\rangle \geq 0$, i.e., $M^f_{\mathbf{0}}$ is monotone. Finally, Lemma \ref{mtransl} implies that for each $y^\loz \in X^\loze$, $M^f_{y^\loz}$ is a monotone set.
\end{example}

\begin{example}\label{ExMonRtree} Let $X$ be the same as in Example \ref{ExMonS}. For each $n\in \mathbb{N}$, set $M:=\big\{\big(x_n,\big[\overrightarrow{x_{n+1}y_n}\big]\big):n\in\mathbb{N}\big\}$, where $x_n=[(n,1)]$ and $y_n=[(n,0)]$. Then $M$ is monotone. Indeed, for each $n,m \in \mathbb{N}$,
\begin{align*}
&\big\langle \big[\overrightarrow{x_{n+1}y_n}\big]-\big[\overrightarrow{x_{m+1}y_m}\big],\overrightarrow{x_mx_n}\big\rangle =\begin{cases}
2,  &~~~~n\in \{m-1,m+1\},\\
0, &~~~~o.w. \\
\end{cases}
\end{align*}
\end{example}

\begin{definition} {\rm Let $X$ be an Hadamard space. A monotone set $M \subseteq X \times X^\loze$ is called \textit{maximal} if there is no monotone set $L \subseteq X \times X^\loze$  that properly contains $M$.
}\end{definition}

\begin{example}
\begin{enumerate}
\item[(i)] Let $X$ and $M$ be the same as in Example \ref{ExMonRtree}. It is shown that $M$ is monotone. We claim that it is not maximal monotone. To see this, let $x=[(n,1)]$, $x^\loz=\overrightarrow{[(n+1,1)][(n,0)]}$,
$z=[(1,0)]$ and $z^\loz=\big[\overrightarrow{[(1,\frac{1}{2})][(1,1)]}\big]$  be arbitrary. Then
$(x,x^\loz)\in M$,  $(z, z^\loz)\notin M$ and
 \begin{align*}
\langle z^\loz-x^\loz, \overrightarrow{xz} \rangle
&= \big\langle\overrightarrow{[(1,\sfrac{1}{2})][(1,1)]}), \overrightarrow{[(n,1)][(1,0)]} \big\rangle-\big\langle\overrightarrow{[(n+1,1)][(n,0)]}), \overrightarrow{[(n,1)][(1,0)]} \big\rangle=\begin{cases}
\frac{1}{2},  &~~~~n=1,\\
\frac{3}{2}, &~~~~n\neq 1. \\
\end{cases}
\end{align*}
This implies that $M$ is not maximal.

\item[(ii)] Suppose that $(X,d)$ is a flat Hadamard space with linear dual space $X^\loze$, $y^\loz \in X^\loze$ and  $f\in\Gamma(X)$.
It follows from Example \ref{monloz} that $M^f_{\mathbf{0}}$ is a monotone set. Let $(y, y^\loz) \in X \times X^\loze \setminus M^f_{\mathbf{0}}$. By using Lemma \ref{mtransl}, we conclude that:
\begin{equation}\label{mji}
(y,{\mathbf{0}}_{X^\loze}) \notin M^g_{\mathbf{0}},
\end{equation}
where $g(z)=f(z)+\pi_p(z,-y^\loz)$, $z \in X$.

Now, consider the extended real-valued mapping $h(z)=g(z)+\pi_z(x,[\overrightarrow{xy}])$, $z\in X$. It follows from Corollary \ref{Corcpthg}(ii) that $h$ has unique minimizer $x \in X$. Hence, for each $z \in X$ we obtain $h(z)+\pi_z(x,\mathbf{0}_{X^\loze})=h(z)\geq h(x)$. Therefore, $\mathbb{I}_h(x,[\overrightarrow{xy}],[\overrightarrow{yx}]) \geq h(x)$. Consequently, by using Lemma \ref{mtransl}, we get
$
(x,[\overrightarrow{xy}]) \in M^h_{[\overrightarrow{yx}]}=M^{\tilde{h}}_{\mathbf{0}}=M^g_{\mathbf{0}},
$
where $\tilde{h}(\cdot)=h(\cdot)+\pi_x(\cdot,[\overrightarrow{xy}])$. Thus
\begin{equation}\label{RMg0}
(x,[\overrightarrow{xy}]) \in M^g_{\mathbf{0}}.
\end{equation}
It follows from \eqref{mji} that $x\neq y$. On the other hand, we derive from Lemma \ref{mtransl} and  \eqref{RMg0} that   $(x,[\overrightarrow{xy}]+y^\loz) \in M^f_{\mathbf{0}}$. Moreover,
$
\langle[\overrightarrow{xy}]+y^\loz-y^\loz, \overrightarrow{yx}\rangle =\langle\overrightarrow{xy},\overrightarrow{yx}\rangle=-d(x,y)^2<0$,
which implies that  $(y, y^\loz)$ is not monotonically related to $M^f_{\mathbf{0}}$. Hence,  $M^f_{\mathbf{0}}$ is a maximal monotone set. Finally, it follows from Lemma \ref{mtransl} and Corollary \ref{Corcpthg}(i) that $M^f_{y^\loz}$ is a maximal monotone set.
\end{enumerate}
\end{example}

The following well-known fact states that every monotone set in Hadamard spaces can be extended to a maximal monotone set. The proof is similar to that proof of {\rm\cite[Theorem 20.21]{Bauschke2017}}. For the sake of completeness, we add a proof.
\begin{proposition}\label{maxex} Suppose that $M \subseteq X \times X^\loze$ is a monotone set. Then there exists a maximal monotone extension (which is not necessarily unique) of $M$; i.e., a maximal monotone set $\widetilde{M}\subseteq X \times X^\loze$ such that $M\subseteq \widetilde{M}$.
\end{proposition}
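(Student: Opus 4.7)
The plan is a routine Zorn's-lemma argument, adapted to the Hadamard-space setting. First I would let
\[
\mathcal{M} := \{\, L \subseteq X \times X^\loze : L \text{ is monotone and } M \subseteq L \,\},
\]
equipped with the partial order induced by set inclusion. Note that $\mathcal{M}$ is nonempty since $M \in \mathcal{M}$ by hypothesis.

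Next I would verify the chain condition needed for Zorn's lemma. Let $\mathcal{C} \subseteq \mathcal{M}$ be a totally ordered subfamily (a chain) and set $L_0 := \bigcup_{L \in \mathcal{C}} L$. Clearly $M \subseteq L_0 \subseteq X \times X^\loze$. To see that $L_0$ is monotone, pick any two pairs $(x,x^\loz), (y,y^\loz) \in L_0$. By definition of union, there exist $L_1, L_2 \in \mathcal{C}$ with $(x,x^\loz) \in L_1$ and $(y,y^\loz) \in L_2$. Since $\mathcal{C}$ is totally ordered, either $L_1 \subseteq L_2$ or $L_2 \subseteq L_1$; in either case both pairs lie in a common member of $\mathcal{C}$, which is monotone by membership in $\mathcal{M}$. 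Hence $\langle x^\loz - y^\loz, \overrightarrow{yx}\rangle \geq 0$, proving $L_0$ is monotone. Thus $L_0 \in \mathcal{M}$ and is an upper bound for $\mathcal{C}$.

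Having verified that every chain in $(\mathcal{M},\subseteq)$ admits an upper bound, I would invoke Zorn's lemma to obtain a maximal element $\widetilde{M} \in \mathcal{M}$. By construction $M \subseteq \widetilde{M}$ and $\widetilde{M}$ is monotone. Maximality in $\mathcal{M}$ coincides with maximality in the sense of Definition~3.4: if some monotone $L \subseteq X \times X^\loze$ properly contained $\widetilde{M}$, then $L$ would also contain $M$, hence $L \in \mathcal{M}$, contradicting the maximality of $\widetilde{M}$ in $\mathcal{M}$. This finishes the argument.

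I do not anticipate a genuine obstacle: the only subtlety is checking that monotonicity — which is a pairwise condition involving the quasilinearization pairing $\langle\cdot,\cdot\rangle$ and the dual action — is preserved under directed unions, but this is immediate because every pair of elements of $L_0$ already lies in one member of the chain. The non-uniqueness remark in the statement is automatic, since different chains through $M$ will in general produce distinct maximal extensions; no separate argument is needed.
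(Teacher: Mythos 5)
Your proof is correct and follows essentially the same Zorn's-lemma argument as the paper, including the verification that a union over a chain of monotone extensions is again monotone. The only difference is that the paper treats the case $M=\emptyset$ separately, whereas you handle it uniformly; your version is fine, since the empty set is vacuously monotone, so $M\in\mathcal{M}$ and maximality in $\mathcal{M}$ still coincides with maximal monotonicity.
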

\begin{proof}There are two cases:
\begin{description}
\item[{\rm Case I:}] $M\neq\emptyset$;
In this case, consider the set ${\mathfrak{M}}:=\{L\subseteq X \times X^\loze: L~ \textit{is a monotone set and } M \subseteq L\}$.
It is clear that  $({\mathfrak{M}},\preceq)$ is a partially ordered set, where for every $L_1 ,L_2 \in {\mathfrak{M}}$, $L_1 \preceq L_2 :\Leftrightarrow L_1\subseteq L_2$. Let ${\mathfrak{A}}$ be a chain in ${\mathfrak{M}}$. One can see that $\cup_{A\in {\mathfrak{A}}} A$ is an upper bound of ${\mathfrak{A}}$. Now, by using  the Zorn's lemma, there exists a maximal element $\widetilde{M} \in \mathfrak{M}$.
\item[{\rm Case II:}]  $M=\emptyset$;
In this case, let $p \in X$ be fixed. By Case I, $\{(p,\mathbf{0}_{X^\loze})\}$ has a maximal monotone extension, say  $\widetilde{M}$. Obviously,   $\widetilde{M}$ is a maximal monotone extension of $M$.
\end{description}
Then in any cases $M \subseteq X \times X^\loze$ has a maximal monotone extension.
\end{proof}

\begin{remark} Let $X$ be an Hadamard space and  $M \subseteq X \times X^\loze$ be a monotone set. In view of  Proposition \ref{maxex}, there exists $\widetilde{M}\subseteq X \times X^\loze$  as a maximal monotone extension of $M$. Set
\[\widetilde{\mathfrak{M}}(M):=\big\{\widetilde{M}\subseteq X \times X^\loze:\widetilde{M}~ \textit{is a maximal monotone extension of M}\big\}.\]
The set of all maximal monotone sets in $X \times X^\loze$ is denoted by ${\mathfrak{MS}}(X)$; on the other words, ${\mathfrak{MS}}(X)=\widetilde{\mathfrak{M}}( X \times X^\loze)$.
\end{remark}
\begin{proposition}
\label{vt} Let $M \subseteq X \times X^\loze $ be a monotone set. Then $(x,x^\loz) \in M^\mu$ if and only if $M\cup \{(x,x^\loz)\}$ is monotone.
\end{proposition}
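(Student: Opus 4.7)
The plan is to prove both directions by directly unpacking Definition \ref{monpol}, using only reflexivity and symmetry of the relation $\mu$ together with the monotonicity already assumed for $M$. No additional lemma is needed, so the work is essentially bookkeeping.

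For the forward direction, I would assume $(x,x^\loz)\in M^\mu$, which by definition means $(x,x^\loz)\mu(y,y^\loz)$ for every $(y,y^\loz)\in M$. To check that $M\cup\{(x,x^\loz)\}$ is monotone I would take two arbitrary elements of this union and verify they are monotonically related. If both lie in $M$, this is given by the monotonicity of $M$. If one is $(x,x^\loz)$ and the other lies in $M$, it follows from the assumption, together with symmetry of $\mu$ noted in Definition \ref{monpol}. If both coincide with $(x,x^\loz)$, reflexivity of $\mu$ supplies the inequality $\langle x^\loz-x^\loz,\overrightarrow{xx}\rangle=0\geq 0$.

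For the reverse direction, I would assume $M\cup\{(x,x^\loz)\}$ is monotone. Then for every $(y,y^\loz)\in M$ the pair $(x,x^\loz),(y,y^\loz)$ both lie in a monotone set, hence $(x,x^\loz)\mu(y,y^\loz)$. This is precisely the statement $(x,x^\loz)\mu M$, i.e.\ $(x,x^\loz)\in M^\mu$.

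I do not anticipate a real obstacle here; the only point requiring attention is to make explicit that the relation $\mu$ is reflexive and symmetric on $X\times X^\loze$ (already noted after Definition \ref{monpol}), so that handling the three cases in the forward direction is a genuine proof rather than a notational shuffle.
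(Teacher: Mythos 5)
Your proof is correct and is exactly the argument the paper has in mind: the paper dismisses this proposition with ``It is straightforward,'' and your three-case unpacking of the definition (using monotonicity of $M$, the hypothesis, and reflexivity of $\mu$) is the routine verification being alluded to.
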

\begin{proof}
It is straightforward.
\end{proof}
\begin{definition}{\rm{\rm\cite[Definition 2.1]{Koslowski2001}}}\label{Z1} \rm{Let $\mu$ be a relation from $A$ to $B$. Define two functions $\sigma:\mathcal{P}(A)\rightarrow\mathcal{P}(B)$ and $\tau:\mathcal{P}(B) \rightarrow\mathcal{P}(A)$ as follows: \begin{align*}
\sigma(U)&=\{b\in B:u\mu b,\forall u\in U\},~~
\tau(V)=\{a\in A:a\mu v,\forall v\in V\}.
\end{align*}
Then  $ (\sigma,\mathcal{P}(A),\mu,\mathcal{P}(B),\tau)$ or simply $(\sigma,\mu,\tau)$ is called a \textit{polarity}.}\end{definition}
\begin{lemma}{\rm{\rm\cite[Proposition 2.4]{Koslowski2001}}} \label{Z2} Let  $(\sigma,\mathcal{P}(A),\mu,\mathcal{P}(B),\tau)$  be a polarity. Then
\begin{enumerate}
\item[(i)] $\mathcal{P}(A)$ and $\mathcal{P}(B)$ are partially ordered sets, ordered by set inclusion, and $\sigma$ and $\tau$ are order reversing functions.
\item[(ii)] $\sigma\tau$ and $\tau\sigma$ are order increasing; i.e., \begin{align*}
U\subseteq \tau\sigma(U) ~\text{and}~ V\subseteq \sigma\tau(V),~ \forall  U\subseteq A, \forall V\subseteq B.
\end{align*}
\item[(iii)] $\tau$ is a quasi-inverse for $\sigma$ and $\sigma$ is a quasi-inverse for $\tau$; i.e., $\sigma\tau\sigma=\sigma$ and $\tau\sigma\tau=\tau$.
\end{enumerate}
\end{lemma}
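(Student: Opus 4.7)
The plan is to verify the three claims in order, using only the definitions of $\sigma$, $\tau$, and $\mu$ from Definition \ref{Z1}; the argument is the classical Galois-connection / polarity bookkeeping, so the work is just to unfold definitions correctly.

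For part (i), I would first note that $(\mathcal{P}(A),\subseteq)$ and $(\mathcal{P}(B),\subseteq)$ are partially ordered sets by the elementary properties of set inclusion (reflexivity, antisymmetry, transitivity), so nothing beyond this remark is needed. To show $\sigma$ is order reversing, I would take $U_1,U_2\subseteq A$ with $U_1\subseteq U_2$ and let $b\in \sigma(U_2)$. Then $u\mu b$ for every $u\in U_2$, hence in particular for every $u\in U_1$, which gives $b\in\sigma(U_1)$. Thus $\sigma(U_2)\subseteq \sigma(U_1)$. The argument for $\tau$ is symmetric.

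For part (ii), I would take $u\in U$ and show $u\in\tau\sigma(U)$. By the definition of $\tau$, this requires $u\mu b$ for every $b\in\sigma(U)$. But $b\in\sigma(U)$ means precisely that $u'\mu b$ for every $u'\in U$, and $u\in U$, so $u\mu b$. Hence $U\subseteq\tau\sigma(U)$. Interchanging the roles of $\sigma$ and $\tau$ (and of $A$ and $B$) gives $V\subseteq\sigma\tau(V)$ for every $V\subseteq B$.

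For part (iii), I would combine (i) and (ii). Applying (ii) to the set $\sigma(U)\subseteq B$ yields $\sigma(U)\subseteq\sigma\tau(\sigma(U))=\sigma\tau\sigma(U)$. Conversely, starting from the inclusion $U\subseteq \tau\sigma(U)$ of (ii) and applying the order-reversing map $\sigma$ from (i) produces $\sigma\tau\sigma(U)=\sigma(\tau\sigma(U))\subseteq\sigma(U)$. The two inclusions give $\sigma\tau\sigma=\sigma$, and the identity $\tau\sigma\tau=\tau$ follows by the symmetric argument. There is no genuine obstacle here; the only thing to watch is not to confuse the two directions of the inclusions in (iii), which I would avoid by always writing out explicitly which of (i) or (ii) is being invoked at each step.
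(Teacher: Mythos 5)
Your proof is correct. Note that the paper does not actually prove this lemma --- it is quoted verbatim from \cite[Proposition 2.4]{Koslowski2001} --- so there is no in-paper argument to compare against; your unfolding of Definition \ref{Z1} is the standard Galois-connection argument that the cited source uses, with each step (order reversal of $\sigma$ and $\tau$, the two extensivity inclusions, and the combination of (i) and (ii) to get $\sigma\tau\sigma=\sigma$ and $\tau\sigma\tau=\tau$) carried out correctly.
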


\begin{definition}{\rm{\rm\cite[Definition 5.1]{BurrisSakappanavar1981}}} \label{Z3} {\rm A function $\sigma:\mathcal{P}(A)\rightarrow \mathcal{P}(A)$ is a \textit{closure operator} on a set $A$, if $\sigma$ has the following properties: \begin{enumerate}
\item[(i)] $\sigma(U) \subseteq \sigma(V)$,  $\forall\,  U,V\subseteq A$~\text{with}~$U\subseteq V$.
\item[(ii)] $\sigma(\sigma(U))=\sigma(U)$, $\forall\,  U\subseteq A$.
\item[(iii)]  $U \subseteq \sigma(U)$, $\forall\,  U\subseteq A$.
\end{enumerate}
}
\end{definition}
\begin{proposition}\label{yum}  Let $X$ be an Hadamard space and $M \subseteq X \times X^\loze$. Then
\begin{enumerate}
\item[(i)]  $M \subseteq M^{\mu\mu}$.
\item[(ii)] $M^{\mu\mu\mu}=M^\mu$.
\item[(iii)] $M_1\subseteq M_2 \Rightarrow M_2^\mu \subseteq M_1^\mu$, ~$\forall\, M_1,M_2 \subseteq X \times X^\loze$.
\end{enumerate}
\end{proposition}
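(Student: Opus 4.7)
\begin{description}
\item[\textit{Proof plan.}]
The plan is to realize the monotone polar as an instance of the abstract polarity machinery of Definition \ref{Z1} and Lemma \ref{Z2}, so that all three claims become immediate specializations of known facts about polarities.

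Set $A=B=X\times X^\loze$ and let $\mu$ be the monotone relatedness relation from Definition \ref{monpol}. It was already observed there that $\mu$ is symmetric, so the two functions $\sigma,\tau:\mathcal{P}(X\times X^\loze)\to\mathcal{P}(X\times X^\loze)$ of Definition \ref{Z1} coincide, and both act on $M\subseteq X\times X^\loze$ by $\sigma(M)=\tau(M)=M^\mu$. In particular the tuple $(\sigma,\mathcal{P}(X\times X^\loze),\mu,\mathcal{P}(X\times X^\loze),\tau)$ is a polarity in the sense of Definition \ref{Z1}.

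With this identification, part (iii) is exactly the order-reversing property of $\sigma$ in Lemma \ref{Z2}(i); part (i) is exactly the order-increasing property of $\tau\sigma$ in Lemma \ref{Z2}(ii), which in our symmetric setting reads $M\subseteq\sigma\sigma(M)=M^{\mu\mu}$; and part (ii) is exactly the quasi-inverse identity $\sigma\tau\sigma=\sigma$ of Lemma \ref{Z2}(iii), which here becomes $M^{\mu\mu\mu}=M^\mu$.

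The only thing that really needs to be checked is therefore that $\mu$ is symmetric (so that $\sigma$ and $\tau$ can be identified); but this is built into Definition \ref{monpol} because $\langle x^\loz-y^\loz,\overrightarrow{yx}\rangle=\langle y^\loz-x^\loz,\overrightarrow{xy}\rangle$ by properties (ii)--(iii) of the quasilinearization, so $(x,x^\loz)\mu(y,y^\loz)$ iff $(y,y^\loz)\mu(x,x^\loz)$. No genuine obstacle arises; the proof is a one-line invocation of Lemma \ref{Z2} once the polarity has been set up, so I would write it simply as: \emph{``Apply Lemma \ref{Z2} with $A=B=X\times X^\loze$ and $\sigma=\tau=(\cdot)^\mu$.''} \hfill\qed
\end{description}
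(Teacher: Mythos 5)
Your proof is correct and takes essentially the same route as the paper: the paper likewise defines the map $\zeta\colon M\mapsto M^\mu$ on $\mathcal{P}(X\times X^\loze)$, observes that $(\zeta,\mu,\zeta)$ is a polarity in the sense of Definition \ref{Z1}, and derives all three items from Lemma \ref{Z2}. Your explicit verification that symmetry of $\mu$ lets one identify $\sigma$ with $\tau$ is a welcome (if minor) addition of detail that the paper leaves implicit.
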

\begin{proof}
Consider the mapping $\zeta :\mathcal{P}(X \times X^\loze) \rightarrow \mathcal{P}(X \times X^\loze),~ M \mapsto M^\mu$.  
It follows from Definition \ref{Z1} that $(\zeta,\mu,\zeta)$, is a polarity. Consequently,  the items (i), (ii) and (iii) follow from Lemma \ref{Z2}.
\end{proof}
\begin{proposition}\label{pmu} Let $(X,d)$ be an Hadamard space and $\{M_i\}_{i\in I} \subseteq \mathcal{P}(X \times X^\loze)$ be a family of monotone sets. Then
\begin{enumerate}
\item[(i)] $(\bigcup_{i \in I} M_i)^\mu=\bigcap_{i\in I}M_i^\mu.$
\item[(ii)] $\emptyset^{\mu}=X\times X^{\loze}$.
\item[(iii)] $(X\times X^{\loze})^{\mu}=\emptyset$,~\text{provided that $\card(X)>1$}.
\end{enumerate}
\end{proposition}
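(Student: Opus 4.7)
The plan is to handle the three assertions separately, with only the last one requiring any real content.

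For (i), I would simply unravel the definition of monotone polar. By Definition \ref{monpol}, $(x,x^\loz) \in (\bigcup_{i\in I} M_i)^\mu$ means $(x,x^\loz)\,\mu\,(y,y^\loz)$ for every $(y,y^\loz)\in \bigcup_{i\in I} M_i$, which is equivalent to: for every $i\in I$ and every $(y,y^\loz)\in M_i$, $(x,x^\loz)\,\mu\,(y,y^\loz)$. This in turn is equivalent to $(x,x^\loz)\in M_i^\mu$ for all $i\in I$, i.e., $(x,x^\loz)\in \bigcap_{i\in I} M_i^\mu$. No use of the Hadamard structure is needed, and hypotheses on monotonicity of the $M_i$'s are irrelevant here. (This also follows formally from the polarity framework of Lemma \ref{Z2} via Proposition \ref{yum}, but the direct argument is shorter.)

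For (ii), the condition $(x,x^\loz)\in \emptyset^\mu$ is a universally quantified statement over the empty set, hence vacuously true for every $(x,x^\loz)\in X\times X^\loze$. Thus $\emptyset^\mu = X\times X^\loze$.

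Part (iii) is the only part with actual content, and will be where I spend the work. I would fix an arbitrary $(x,x^\loz)\in X\times X^\loze$ and, using $\card(X)>1$, pick $y\in X$ with $y\neq x$, so that $d(x,y)>0$. The plan is to produce $y^\loz\in X^\loze$ witnessing that $(x,x^\loz)$ is not monotonically related to $(y,y^\loz)$. A natural candidate is
\[
y^\loz := x^\loz + \lambda\,[\overrightarrow{yx}], \qquad \lambda>0,
\]
which lies in $X^\loze$ because $X^\loze=\spa X^*$ is linear. Then
\[
\langle x^\loz - y^\loz,\overrightarrow{yx}\rangle
= -\lambda\,\langle[\overrightarrow{yx}],\overrightarrow{yx}\rangle
= -\lambda\,d(y,x)^2 < 0,
\]
by Proposition \ref{dg} (or directly from the definition of the quasilinearization), so $(x,x^\loz)$ is not $\mu$-related to $(y,y^\loz)$. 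Hence $(x,x^\loz)\notin (X\times X^\loze)^\mu$, and since $(x,x^\loz)$ was arbitrary, $(X\times X^\loze)^\mu=\emptyset$.

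The only mild subtlety is keeping track that $[\overrightarrow{yx}]$ really defines an element of $X^\loze$ with $\langle[\overrightarrow{yx}],\overrightarrow{yx}\rangle = d(y,x)^2$, which is immediate from the definition of $X^*$ and its action on $X^2$; apart from this, no deeper property of Hadamard spaces is invoked.
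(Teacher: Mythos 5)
Your proof is correct, and parts (i) and (ii) coincide with the paper's argument (the paper phrases (ii) as a proof by contradiction, but it is the same vacuous-quantification observation). In part (iii) you differ slightly in structure: the paper fixes the pair $(a,[\overrightarrow{ab}])$, $(b,[\overrightarrow{bb}])$ with $a\neq b$ to show that $X\times X^{\loze}$ is not itself monotone, and then invokes Proposition \ref{vt} to derive a contradiction from $(x,x^\loz)\in (X\times X^{\loze})^{\mu}$; you instead produce, for each $(x,x^\loz)$, an explicit witness $(y,\,x^\loz+\lambda[\overrightarrow{yx}])$ to which it fails to be monotonically related. Both hinge on the same identity $\langle \overrightarrow{yx},\overrightarrow{yx}\rangle=d(x,y)^2>0$, but your version is marginally cleaner: Proposition \ref{vt} is stated only for monotone $M$, and $X\times X^{\loze}$ is not monotone, so the paper's appeal to it requires a word of justification that your direct construction avoids. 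One cosmetic remark: the citation of Proposition \ref{dg} is unnecessary, since $\langle[\overrightarrow{yx}],\overrightarrow{yx}\rangle=d(y,x)^2$ follows immediately from the quasilinearization formula \eqref{inb}, as you note.
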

\begin{description}
\item[{\rm \textit{Proof.} (i)}]  By using  Definition \ref{pol}, for each $(x,x^\loz)$, \begin{align*}
(x,x^\loz) \in \big(\bigcup_{i \in I} M_i\big)^\mu &\Longleftrightarrow (x,x^\loz)\mu  \big(\bigcup_{i \in I} M_i \big)\Longleftrightarrow (x,x^\loz)\mu M_i  ~,\forall i \in I\Longleftrightarrow (x,x^\loz) \in M_i^\mu ,\forall i \in I\\&\Longleftrightarrow (x,x^\loz) \in \bigcap_{i\in I}M_i^\mu.
\end{align*}
\item[{\rm(ii)}] Clearly $\emptyset^{\mu}\subseteq X\times X^{\loze}$. Conversely, assume to the contrary that  there exists $ (x,x^\loz) \in X\times X^{\loze}\setminus \emptyset^{\mu}$. Hence, there is $(y,y^\loz)\in \emptyset$ such that $(x,x^\loz) $ and $(y,y^\loz)$ are not monotonically related, yields a contradiction.
\item[{\rm(iii)}] It is enough to prove that $(X\times X^{\loze})^{\mu}\subseteq\emptyset$. Let $ (x,x^\loz) \in (X\times X^{\loze})^{\mu}$ and let $a\in X$ with $a\neq x$.  Clearly $(a, x^\loz-[\overrightarrow{xa}])\in X\times  X^{\loze}$ and so $(x,x^\loz)\mu(a, x^\loz-[\overrightarrow{xa}])$. Then
\[
0\leq\langle x^\loz-(x^\loz-[\overrightarrow{xa}]), \overrightarrow{ax}\rangle=\langle [\overrightarrow{xa}], \overrightarrow{ax}\rangle=\langle \overrightarrow{xa}, \overrightarrow{ax}\rangle=-d(x,a)^2
,\]
a contradiction. \hfill\qed
\end{description}

\begin{definition}\label{muc} {\rm For each $M \subseteq X \times X^\loze $, the \textit{closure operator} induced by the polarity $\mu$ is defined by the mapping $M\mapsto M^{\mu\mu}$. Moreover, we say that $M^{\mu\mu}$ is  the \textit{$\mu$-closure} of $M$ and $M$ is $\mu$-closed if $M^{\mu\mu}=M$.}
\end{definition}
\begin{remark} {\rm It follows from Proposition \ref{yum}(ii) that the family of all $\mu$-closed sets is the family of polars $\{A^\mu: A\subseteq X\times X^\loze\}$. One can see that $M^{\mu\mu}$ is the smallest $\mu$-closed set containing $M$.}
\end{remark}
\begin{proposition}\label{mtg}  Let $X$ be an Hadamard space and $M \subseteq X \times X^\loze$. Then the following statements are equivalent:
\begin{enumerate}
\item[(i)] $M$ is monotone.
\item[(ii)] $M\subseteq M^\mu$.
\item[(iii)]  $M^{\mu\mu} \subseteq M^\mu$.
\item[(iv)] $M^{\mu\mu}$ is monotone.
\end{enumerate}
In addition, $M \in {\mathfrak{MS}}(X)$ if and only if $M= M^\mu$. Moreover, every element of $\mathfrak{MS}(X)$ is $\mu$-closed.
\end{proposition}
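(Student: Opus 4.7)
The plan is to establish the equivalences by closing the cycle $(i)\Leftrightarrow(ii)\Rightarrow(iii)\Rightarrow(iv)\Rightarrow(i)$, and then derive the maximality characterization as a short corollary. Essentially everything follows from the abstract polarity facts already packaged in Proposition \ref{yum} together with the one-point extension observation of Proposition \ref{vt}; no new analysis inside $X$ or $X^\loze$ is needed.

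First I would dispose of $(i)\Leftrightarrow(ii)$ by unpacking the definition of $M^\mu$: a point $(x,x^\loz)$ lies in $M^\mu$ exactly when it is monotonically related to every element of $M$, so the inclusion $M\subseteq M^\mu$ is literally the assertion that any two elements of $M$ are mutually monotonically related. For $(ii)\Rightarrow(iii)$, applying the order-reversing property of Proposition \ref{yum}(iii) to $M\subseteq M^\mu$ yields $M^{\mu\mu}\subseteq M^\mu$. For $(iii)\Rightarrow(iv)$, I take arbitrary $(x,x^\loz),(y,y^\loz)\in M^{\mu\mu}$: both lie in $M^\mu$ by hypothesis, and since $(x,x^\loz)\in M^{\mu\mu}$ means precisely that $(x,x^\loz)$ is $\mu$-related to every element of $M^\mu$, it is in particular $\mu$-related to $(y,y^\loz)$. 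Finally $(iv)\Rightarrow(i)$ follows from $M\subseteq M^{\mu\mu}$ (Proposition \ref{yum}(i)), because any subset of a monotone set is monotone.

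For the additional claim I would prove $M\in\mathfrak{MS}(X)$ iff $M=M^\mu$. Assuming $M$ is maximal monotone, (ii) gives $M\subseteq M^\mu$, and if $(x,x^\loz)\in M^\mu$ then Proposition \ref{vt} makes $M\cup\{(x,x^\loz)\}$ monotone, forcing $(x,x^\loz)\in M$ by maximality; hence $M=M^\mu$. Conversely, $M=M^\mu$ is monotone by (ii), and any monotone extension $L\supseteq M$ must satisfy $L\subseteq M^\mu=M$, since each point of $L$ is $\mu$-related to all of $M$; thus $L=M$ and $M$ is maximal. The $\mu$-closedness of maximal monotone sets then falls out at once from $M^{\mu\mu}=(M^\mu)^\mu=M^\mu=M$.

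There is no substantive obstacle here: the whole argument is formal bookkeeping in the Galois-type polarity induced by $\mu$. The only point needing a moment's care is the maximality direction, where one must remember that checking monotonicity of $M\cup\{(x,x^\loz)\}$ reduces (via Proposition \ref{vt}) to the new point being $\mu$-related to every old one, and not to any stronger compatibility condition.
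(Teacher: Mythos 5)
Your proof is correct and follows essentially the same route as the paper: the same cycle of implications driven by the polarity facts of Proposition \ref{yum}, with the maximality characterization obtained from Proposition \ref{vt}. The only (harmless) deviations are cosmetic: you prove (iii)$\Rightarrow$(iv) by a direct element chase and (iv)$\Rightarrow$(i) by noting that subsets of monotone sets are monotone, where the paper instead manipulates the inclusions $M^{\mu\mu}\subseteq M^{\mu\mu\mu}=M^\mu$ formally.
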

\begin{description}
\item[{\rm\textit{Proof.}  (i)$\Leftrightarrow$(ii)}]    Clearly, $M$ is monotone if and only if every two members of $M$ are monotonically related, or equivalently $M\subseteq M^\mu$. Hence (i) and (ii) are equivalent.

\item[{\rm(ii)$\Rightarrow$(iii)}] An immediate consequence of Proposition  \ref{yum}(iii).

\item[{\rm(iii)$\Rightarrow$(iv)}] By assumption  and Proposition \ref{yum}(iii),  we obtain
 $M^{\mu\mu}=(M^\mu)^\mu\subseteq  (M^{\mu\mu})^{\mu}$.
 Now, equivalence of (i) and (ii), implies that $M^{\mu\mu}$ is monotone.
\item[{\rm(iv)$\Rightarrow$(ii)}] Let $M^{\mu\mu}$ be a monotone set. By using Proposition \ref{yum}(i),  applying (i)$\Rightarrow$(ii) to $M^{\mu\mu}$ and  Proposition \ref{yum}(ii), we get $M\subseteq M^{\mu\mu} \subseteq M^{\mu\mu\mu}=M^\mu$.
Therefore, $M$ is monotone by (ii)$\Rightarrow$(i).

We know that $M$ is monotone if and only if $M \subseteq M^\mu$. Moreover, maximality of $M$ is equivalent to $M^\mu\subseteq M$. Therefore, $M \in \mathfrak{MS}(X)$ if and only if $M= M^\mu$. Finally, by using this fact, for each $M \in \mathfrak{MS}(X)$ we have
 $M= M^\mu=(M^\mu)^\mu=M^{\mu\mu}$.
Now, it follows from Definition \ref{muc} that $M$ is a $\mu$-closed set.\hfill{\qed}

\end{description}

\begin{proposition}\label{bty} Let $X$ be an Hadamard space and $M \subseteq X \times X^\loze$ be a monotone set. Then the following hold:
\begin{enumerate}
\item[(i)] $M^\mu=\bigcup_{\widetilde{M}\in \mathfrak{M}(M)}\widetilde{M}$.
\item[(ii)]  $M^{\mu\mu}=\bigcap_{\widetilde{M}\in \mathfrak{M}(M)}\widetilde{M}$.
\end{enumerate}
\end{proposition}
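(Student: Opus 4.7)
The plan is to establish part (i) directly from Propositions~\ref{maxex}, \ref{vt}, \ref{yum}, and \ref{mtg}, and then deduce (ii) from (i) by a single application of the polarity together with Proposition~\ref{pmu}(i). Here I write $\widetilde{\mathfrak{M}}(M)$ for the collection of maximal monotone extensions of $M$, as in the definition preceding Proposition~\ref{vt}.

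For (i), I would prove the two inclusions separately. To see $\bigcup_{\widetilde{M}\in\widetilde{\mathfrak{M}}(M)}\widetilde{M}\subseteq M^\mu$, fix any maximal monotone extension $\widetilde{M}$ of $M$. Since $M\subseteq\widetilde{M}$, the order-reversing property of the polar (Proposition~\ref{yum}(iii)) gives $\widetilde{M}^\mu\subseteq M^\mu$; and since $\widetilde{M}$ is maximal monotone, Proposition~\ref{mtg} yields $\widetilde{M}=\widetilde{M}^\mu$, so $\widetilde{M}\subseteq M^\mu$. Taking the union over $\widetilde{M}\in\widetilde{\mathfrak{M}}(M)$ gives the inclusion. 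For the reverse inclusion, pick $(x,x^\loze)\in M^\mu$. By Proposition~\ref{vt} the set $M\cup\{(x,x^\loze)\}$ is monotone, and Proposition~\ref{maxex} produces a maximal monotone extension $\widetilde{M}$ of $M\cup\{(x,x^\loze)\}$; this $\widetilde{M}$ clearly also extends $M$, so it belongs to $\widetilde{\mathfrak{M}}(M)$ and contains $(x,x^\loze)$.

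For (ii), I would simply apply $\mu$ to both sides of (i). Using Proposition~\ref{pmu}(i) to convert a union into an intersection of polars, and Proposition~\ref{mtg} to identify $\widetilde{M}=\widetilde{M}^\mu$ for each maximal monotone $\widetilde{M}$, I obtain
\begin{equation*}
M^{\mu\mu}=(M^\mu)^\mu=\Big(\bigcup_{\widetilde{M}\in\widetilde{\mathfrak{M}}(M)}\widetilde{M}\Big)^\mu=\bigcap_{\widetilde{M}\in\widetilde{\mathfrak{M}}(M)}\widetilde{M}^\mu=\bigcap_{\widetilde{M}\in\widetilde{\mathfrak{M}}(M)}\widetilde{M}.
\end{equation*}

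The argument is essentially bookkeeping, so I do not anticipate a genuine obstacle; the only subtlety is verifying that a maximal monotone extension of $M\cup\{(x,x^\loze)\}$ is automatically a maximal monotone extension of $M$, which is immediate from $M\subseteq M\cup\{(x,x^\loze)\}$ and the definition of $\widetilde{\mathfrak{M}}(M)$, so Zorn's lemma need not be reapplied at that step.
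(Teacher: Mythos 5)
Your proof is correct and follows essentially the same route as the paper: both directions of (i) use Propositions \ref{vt}, \ref{maxex}, \ref{yum}(iii) and \ref{mtg} in the same way, and (ii) is obtained by the identical computation via Proposition \ref{pmu}(i) and the identity $\widetilde{M}=\widetilde{M}^{\mu}$ for maximal monotone sets. No substantive difference.
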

\begin{description}
\item[{\rm\textit{Proof.}  (i)}]  Let $(x,x^\loz)\in M^\mu$ be given. It follows from Proposition \ref{vt} that $M\cup\{(x,x^\loz)\}$ is a monotone set. By Proposition \ref{maxex}, there exists maximal monotone extension $\widetilde{M}$ for $M\cup\{(x,x^\loz)\}$. Hence,  $\widetilde{M} \in \mathfrak{M}(M)$ and $(x,x^\loz)\in \widetilde{M}$. Therefore, $M^\mu\subseteq\bigcup_{\widetilde{M}\in \mathfrak{M}(M)}\widetilde{M}$. Conversely, let $(x, x^\loz)\in\bigcup_{\widetilde{M}\in \mathfrak{M}(M)}\widetilde{M}$. Then there exists $\widetilde{M}\in \mathfrak{M}(M)$ such that $(x,x^\loz)\in \widetilde{M}$. By  using Proposition \ref{mtg} and Proposition \ref{yum}(iii),  we get $(x,x^\loz)\in \widetilde{M}=\big(\widetilde{M}\big)^\mu\subseteq M^\mu$. Consequently, $\bigcup_{\widetilde{M}\in \mathfrak{M}(M)}\widetilde{M}\subseteq M^\mu$.

\item[{\rm(ii)}] By using (i), Proposition \ref{pmu}(i) and Proposition \ref{maxex}, we obtain: \[M^{\mu\mu}=(M^\mu)^\mu=\bigg(\bigcup_{\widetilde{M}\in \mathfrak{M}(M)}\widetilde{M}\bigg)^\mu=\bigcap_{\widetilde{M}\in \mathfrak{M}(M)}\big(\widetilde{M}\big)^\mu=\bigcap_{\widetilde{M}\in \mathfrak{M}(M)}\widetilde{M}.\]
 \end{description}
We are done.\hfill\qed
\begin{lemma} Let $X$ be an Hadamard space and $M \subseteq X \times X^\loze$. Then $M$ is monotone if and only if  there exists $\widetilde{M}\in\mathfrak{MS}(X)$ such that $\widetilde{M}\subseteq M^\mu$.
\end{lemma}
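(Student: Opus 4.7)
The plan is to reduce both implications to the characterization $M$ monotone $\iff$ $M\subseteq M^\mu$ (Proposition \ref{mtg}), combined with the order-reversing property $M_1\subseteq M_2 \Rightarrow M_2^\mu\subseteq M_1^\mu$ (Proposition \ref{yum}(iii)), and the extension result of Proposition \ref{maxex}. Essentially nothing new needs to be constructed; the argument is pure bookkeeping of polars.

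For the forward direction, I would assume $M$ monotone and invoke Proposition \ref{maxex} to choose some $\widetilde{M}\in\mathfrak{MS}(X)$ with $M\subseteq\widetilde{M}$. Applying Proposition \ref{yum}(iii) gives $\widetilde{M}^\mu\subseteq M^\mu$, and since $\widetilde{M}$ is maximal monotone the second half of Proposition \ref{mtg} says $\widetilde{M}=\widetilde{M}^\mu$. Chaining these yields $\widetilde{M}\subseteq M^\mu$, which is exactly what is required.

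For the converse, I would start from a maximal monotone $\widetilde{M}\subseteq M^\mu$ and apply the polar once more. By Proposition \ref{yum}(iii), $\widetilde{M}\subseteq M^\mu$ implies $M^{\mu\mu}\subseteq\widetilde{M}^\mu$, and using $\widetilde{M}=\widetilde{M}^\mu$ again one obtains $M^{\mu\mu}\subseteq\widetilde{M}$. Combining with the containment $M\subseteq M^{\mu\mu}$ from Proposition \ref{yum}(i) and the hypothesis $\widetilde{M}\subseteq M^\mu$, I would read off the chain
\[
M\;\subseteq\;M^{\mu\mu}\;\subseteq\;\widetilde{M}\;\subseteq\;M^\mu,
\]
so that $M\subseteq M^\mu$. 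By Proposition \ref{mtg}, this is equivalent to $M$ being monotone.

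The only point requiring a little care is to apply Proposition \ref{mtg} in its two distinct forms, once to recognize $M$'s monotonicity via $M\subseteq M^\mu$ and once to collapse $\widetilde{M}^\mu$ to $\widetilde{M}$ using maximality. Beyond that there is no genuine obstacle: both directions reduce to one line of inclusions each, and the proof should fit comfortably in a few lines.
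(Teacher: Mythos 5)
Your proof is correct and follows essentially the same route as the paper: the converse is the identical chain $M\subseteq M^{\mu\mu}\subseteq\widetilde{M}^{\mu}=\widetilde{M}\subseteq M^{\mu}$, and for the forward direction you merely inline the argument that the paper delegates to Proposition~\ref{bty}(i) (whose proof is exactly your combination of Propositions~\ref{maxex}, \ref{yum}(iii) and \ref{mtg}). No gaps.
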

\begin{proof} Let $M$ be a monotone set. It follows from Proposition \ref{bty}(i) that $M^\mu$ contains a maximal monotone set. Conversely, let there exists $\widetilde{M}\in\mathfrak{MS}(X)$ such that $\widetilde{M}\subseteq M^\mu$. Now, by using Proposition  \ref{yum}(i)$\&$(iii)  and Proposition \ref{mtg}, we conclude that $M \subseteq(M^\mu)^\mu\subseteq\big(\widetilde{M}\big)^\mu=\widetilde{M}\subseteq M^\mu$.
The claim therefore follows from Proposition \ref{mtg}((i)$\Leftrightarrow$(ii)).
\end{proof}

\begin{proposition} \label{mumon} Let $X$ be an Hadamard space and $M \subseteq X\times X^\loze$ be a maximal monotone set. Then \begin{enumerate}
\item[(i)] for each $u\in X$, $M^\loz_u:=\{ u^\loz\in X^\loze: (u,u^\loz)\in M\}$ is a closed and convex subset of $X^\loze$.
\item[(ii)] $M$ is sequentially $bw\times${\scalebox{0.9}{$\|\cdot\|_\loz$}}-closed in $X\times X^\loze$, (and hence $d \times \|\cdot\|_\loz$-closed).
\item[(iii)] If $\Dom(M) \subseteq X$ is bounded, then $M$ is sequentially weakly$\times \|\cdot\|_\loz$-closed in $X\times X^\loze$.
\end{enumerate}
\end{proposition}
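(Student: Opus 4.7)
The plan is to leverage maximality of $M$, which via Proposition \ref{mtg} gives $M = M^\mu$; consequently, in each part I only need to verify that a candidate pair $(x, x^\loz)$ is monotonically related to every $(y, y^\loz) \in M$, i.e., $\langle x^\loz - y^\loz, \overrightarrow{yx}\rangle \geq 0$. All three assertions then reduce to passing to the limit in this scalar inequality along an approximating sequence, with the sequential continuity supplied by Propositions \ref{pictsconvex}(i) and \ref{loz1}.

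For (ii), I would start with $(x_n, x^\loz_n) \in M$ satisfying $(x_n, x^\loz_n) \xrightarrow{bw \times \|\cdot\|_\loz} (x, x^\loz)$, fix $(y, y^\loz) \in M$, and rewrite the relation $(x_n, x^\loz_n)\,\mu\, (y, y^\loz)$ as
\[\langle x^\loz_n, \overrightarrow{yx_n}\rangle \geq \langle y^\loz, \overrightarrow{yx_n}\rangle.\]
The right-hand side is exactly $\pi_y(x_n, y^\loz)$, which tends to $\pi_y(x, y^\loz) = \langle y^\loz, \overrightarrow{yx}\rangle$ by the sequential $bw \times \|\cdot\|_\loz$-continuity established in Proposition \ref{pictsconvex}(i). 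For the left-hand side I would use $\overrightarrow{yx_n} = -\overrightarrow{x_ny}$ and invoke Proposition \ref{loz1}, whose hypotheses (boundedness of $\{x_n\}$, weak convergence of $x_n$ to $x$, norm convergence of $x^\loz_n$ to $x^\loz$) are exactly what $bw \times \|\cdot\|_\loz$-convergence provides, to conclude convergence to $\langle x^\loz, \overrightarrow{yx}\rangle$. Taking the limit then yields $\langle x^\loz - y^\loz, \overrightarrow{yx}\rangle \geq 0$, so $(x, x^\loz) \in M^\mu = M$. The parenthetical $d \times \|\cdot\|_\loz$-closedness comes free, since metric convergence implies both weak convergence and boundedness. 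The single delicate point, and the main obstacle, is the need for $\{x_n\}$ to be bounded in order to invoke Proposition \ref{loz1}; this is precisely why the statement uses $bw$-convergence rather than bare weak convergence.

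Part (i) would then follow by specialization and a short algebraic check. Closedness of $M^\loz_u$: given $u^\loz_n \in M^\loz_u$ with $u^\loz_n \to u^\loz$ in $\|\cdot\|_\loz$, apply (ii) to the constant spatial sequence $x_n \equiv u$, which is trivially bounded and weakly convergent to $u$. Convexity: for $u^\loz, v^\loz \in M^\loz_u$, $\lambda \in [0,1]$ and any $(y, y^\loz) \in M$, the linearity of $\langle \cdot, \overrightarrow{yu}\rangle$ in its first slot (inherited from the vector-space structure of $X^\loze$) gives
\[\langle \lambda u^\loz + (1-\lambda)v^\loz - y^\loz, \overrightarrow{yu}\rangle = \lambda \langle u^\loz - y^\loz, \overrightarrow{yu}\rangle + (1-\lambda)\langle v^\loz - y^\loz, \overrightarrow{yu}\rangle \geq 0,\]
so maximality places $\lambda u^\loz + (1-\lambda)v^\loz$ back in $M^\loz_u$. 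Finally, (iii) is essentially a free corollary of (ii): when $\Dom(M)$ is bounded any sequence $\{x_n\} \subseteq \Dom(M)$ is automatically bounded, so weak$\times\|\cdot\|_\loz$-convergence and $bw\times\|\cdot\|_\loz$-convergence coincide on sequences in $M$, and (ii) applies directly.
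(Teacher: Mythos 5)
Your proposal is correct and follows essentially the same route as the paper: maximality gives $M=M^\mu$, and each part reduces to passing to the limit in $\langle x^\loz-y^\loz,\overrightarrow{yx}\rangle\geq 0$ via Propositions \ref{pictsconvex}(i) and \ref{loz1}, with the same linearity argument for convexity in (i) and the same observation that (iii) is automatic from (ii). The only cosmetic difference is that you obtain the closedness in (i) as a special case of (ii) with a constant spatial sequence, whereas the paper proves (i) directly first; both are equally valid.
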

\begin{description}
\item[{\rm\textit{Proof. }(i)}]  Let $u^\loze\in\overline{M^\loz_u}$.  Then there exists $\{u_n^\loz\} \subseteq M^\loz_u$ such that $u_n^\loze\xrightarrow{\|\cdot\|_\loz} u^\loz$.
For each  $(x,x^\loz) \in M$, by monotonicity of $M$, we have \mbox{$\langle u_n^\loz-x^\loz,\overrightarrow{xu}\rangle\!\geq\!0$.} By applying Proposition \ref{loz1}, as $n\rightarrow\infty$, we get  $\langle u^\loz-x^\loz,\overrightarrow{xu} \rangle \geq 0$. Therefore
$(u,u^\loz)\in M^\mu$. Now, maximality of $M$ implies  $(u,u^\loz)\in M$ or equivalently $u^\loz\in M^\loz_u$. Consequently, $M^\loz_u$ is closed. For proving convexity of $M^\loz_u$, let $u^\loz, v^\loz\in M^\loz_u$ and $\lambda\in [0, 1]$ be arbitrary and fixed. Then for each  $(x,x^\loz) \in M$, by monotonicity of $M$, we have:
 \[ \langle (1-\lambda)u^\loz+\lambda v^\loz -x^\loz,\overrightarrow{xu} \rangle=(1-\lambda) \langle u^\loz-x^\loz,\overrightarrow{xu} \rangle +\lambda \langle v^\loz-x^\loz,\overrightarrow{xu} \rangle \geq 0.\]
 Consequently, $(u, \lambda u^\loz +(1-\lambda)v^\loz) \in M^\mu$. Again,  maximality of $M$ implies that $(u, \lambda u^\loz +(1-\lambda)v^\loz) \in M$; i.e.,
 $\lambda u^\loz +(1-\lambda)v^\loz\in M^\loz_u$ and so $M^\loz_u$ is convex.

\item[{\rm(ii)}] Let $\{(x_n,x^\loz_n)\} \subseteq M$ be a sequence such that $(x_n, x^\loz_n)\xrightarrow{bw\times\|\cdot\|_\loz}(x, x^\loz)$. For each $(y,y^\loz) \in M$, $\langle x_n^\loz-y^\loz,\overrightarrow{yx_n} \rangle \geq 0$.
It follows from Proposition \ref{loz1} that $\langle x^\loz-y^\loz,\overrightarrow{yx} \rangle=\lim_{n\rightarrow+\infty}\langle x_n^\loz-y^\loz,\overrightarrow{yx_n} \rangle \geq 0$. Hence $(x,x^\loz) \in M^\mu=M$. Thus $M$ is sequentially $bw\times${$\|\cdot\|_\loz$}-closed in $X\times X^\loze$. Finally,  since any convergent sequence in a metric space is bounded and weakly convergent, it follows that $M$ is $d \times \|\cdot\|_\loz$-closed.
\item[{\rm(iii)}] It is an immediate consequence of (ii). \hfill\qed
\end{description}


\end{document}